\documentclass[journal]{IEEEtran}
\onecolumn
\usepackage{mathrsfs}
\usepackage{amsmath}
\usepackage{graphicx}
\usepackage{amssymb}
\usepackage{enumerate}
\usepackage{longtable,tabularx,float}
\usepackage{cite}
\usepackage{mathcomp}
\usepackage{multirow}
\usepackage{supertabular}
\usepackage{stmaryrd}
\usepackage{color}
\usepackage{url}
\usepackage{mathtools}
\usepackage{tikz}
\usepackage{amsthm}
\interdisplaylinepenalty=2500

\DeclareGraphicsRule{.tif}{png}{.png}{`convert #1 `dirname
#1`/`basename #1 .tif`.png}

\newtheorem{definition}{Definition}[section]
\newtheorem{lemma}{Lemma}[section]
\newtheorem{theorem}{Theorem}[section]

\renewcommand{\H}{\mathcal {H}}
\newcommand{\R}{\mathcal {R}}

\newcommand{\mm}{{~\text{mod}^+~}}

\begin{document}

\title{Covering Radius of Permutation Groups with Infinity-Norm}

\author{Xin Wei,~and~Xiande~Zhang
\thanks{X. Wei ({\tt weixinma@mail.ustc.edu.cn}) and X. Zhang ({\tt drzhangx@ustc.edu.cn}) are with School of Mathematical Sciences,
University of Science and Technology of China, Hefei, 230026, Anhui, China.  }
}
\maketitle

\begin{abstract}
\boldmath The covering radius of permutation group codes are studied in this paper with $l_{\infty}$-metric. We determine the covering radius of the $(p,q)$-type group, which is a direct product of two cyclic transitive groups. We also deduce the maximum covering radius among all the relabelings of this group under conjugation, that is, permutation groups with the same algebraic structure but with relabelled members.  Finally, we give a lower bound of the covering radius of the dihedral group code, which differs from the trivial upper bound by a constant at most one. This improves the result of Karni and Schwartz in 2018, where the gap between their lower and upper bounds tends to infinity as the code length grows.
\end{abstract}

\begin{IEEEkeywords}
\boldmath Covering radius; covering code; cyclic transitive group; dihedral group; infinity norm; relabeling
\end{IEEEkeywords}

\section{introduction}
Given a finite set of points $P$ in a metric space $M$, the {\it covering radius} of $P$ in $M$ is the
smallest number $r$ such that spheres of radius $r$ centered around all the points in $P$ cover the entire space. Such a set $P$ is called a {\it covering code}. There are two central problems in the literature about covering codes: the mathematical
question of determining the covering radius of any given code, and the more practical problem of constructing good covering codes having a specified length and covering radius. An entire book has been written on the subject and we point the reader to \cite[Chap. 1]{cohen1997covering} for a discussion of various applications of covering codes.

In this paper, we focus on the covering radius problem over the symmetric group $S_n$ as a metric space. The study of coding problems over permutations can date back to the works in \cite{SLEPIAN1965PERMUTATION,chadwick1969rank, BERGER1972PERMUTATION}. Since the paper of Blake et al. \cite{blake1979coding}, the symmetric group has been widely studied as a setting for coding theory with various permutation metrics.
 Covering radius for sets of permutations have only been studied by Cameron and Wanless in \cite{cameron2005covering}, with Hamming distance, due to their close relations to classical conjectures of Ryser and Brualdi on transversals of Latin squares \cite{keedwell2015latin}. Recently, more works about the covering radius problem for some permutation groups can be found in \cite{keevash2006random,quistorff2006survey,wanless2013transversals,hendrey2018covering,cossidente2019covering,bamberg2019covering}, but all of which only deal with Hamming distance.

%In a typical setting, the symmetric group over $n$ elements $S_n$, with a distance function $d:S_n\times S_n\to \mathbb N_0$, forms a metric space. An \emph{error correcting code} of length $n$ is defined as a subset of $S_n$, the elements in which are called \emph{codewords}. The minimum distance of a code is the smallest distance between any two codewords. It's well-known that a code $C$ can induce a packing of the whole $S_n$ by disjoint $n$ ``balls'' centered at the codewords and with the same radius.

%In this paper we are interested in the dual problem. Instead of finding a packing code, we are interested in the smallest radius of balls centered at the codewords such that their union is $S_n$. This radius is called the covering radius of the code.
Motivated by applications to information storage in non-volatile memories, the rank-modulation scheme was
suggested \cite{Jiangetal:2009}, in which information is stored in the form of
permutations. The $l_\infty$-metric is one of the main relevant permutation metrics for this scheme to solve a limited-magnitude error model. Thus, a lot of works have done on the error-correcting codes with $l_\infty$-metric recently, see for example \cite{klove2010permutation,tamo2010correcting,tamo2012labeling,zhang2016snake,klove2009generating,klove2011lower,schwartz2011optimal}. For various distances of permutation codes, see a summarization in the survey \cite{DEZA1998METRICS}. For computational complexity problems of finding a particular permutation in a subgroup with some distance property, see \cite{BUCHHEIM2009SUBGROUP} and references therein.

Covering codes over permutations with the $l_\infty$-metric have
 been recently studied in \cite{hassanzadeh2016bounds,wang2015compression}, while the covering radius problem was studied in \cite{karni2018infinity}. A permutation code, as a subset of the symmetric group $S_n$, may happen to be a subgroup, in which case we call it a \emph{group code}. In \cite{karni2018infinity}, the authors studied the covering radius of two permutation group codes, the transitive cyclic group $G_n$  and the dihedral group $D_n$, and they used them as building-block covering codes to get long covering codes, which generalised a construction in \cite{hassanzadeh2016bounds}. Since the $l_\infty$-metric is right invariant, but not left invariant, the authors in \cite{karni2018infinity} also considered a conjugate of the transitive cyclic group, which they call a ``relabeling'', and studied the maximum and minimum covering radius achievable among all relabelings.

%Codes over permutations are usually applied in the area of power-line communications (for example, see \cite{VINCK2000CODED}), rank modulation for flash memories and for phase-changing memories (see \cite{Jiangetal:2009} and \cite{PAPANDREOU2011DRIFT}). In the latter two applications, a group of $n$ cells (either flash or phase-change memory cells) is used to store information by means of ranking the cells according to charge level in the former, or resistance in the latter. Thus, the stored information is a permutation of $\{1,2,\ldots,n\}$.

%   Various distances of permutation codes have been summarized in the survey\cite{DEZA1998METRICS}.  In this article, we are interested in the $l_\infty$-metric, which means the distance of two permutations $f$ and $g$ is the largest one among $|f(i)-g(i)|$, $i\in\{1,2,\ldots,n\}$. Error-correcting codes under this metric may also be found in \cite{BUCHHEIM2009SUBGROUP} and other works listed in \cite{tamo2012labeling}. The motivation behind some of these works is a limited-magnitude error model.
Although group structures provide rich information about the codes, it's hard to determine the covering radius in most cases. This is because that the volume of balls under $l_\infty$-metric is not easy to compute, see \cite{klove2009generating,klove2011lower,KLOVE2008SPHERES,schwartz2017improved}. In  \cite{karni2018infinity}, the authors exactly determined the covering radius of $G_n$ as well as the maximum covering radius among all its relabelings, but only gave lower bounds on those for minimum covering radius and for the dihedral groups $D_n$. Their results for relabelings showed that the covering radius of the transitive cyclic group don't increase much after relabelings.

In this paper, we study a new group code, which we call a $(p,q)$-type code, and determine its covering radius with an explicit form. Further, we consider the relabelings of the $(p,q)-$type code, and determine the maximum covering radius that can be achieved. Our result confirms again the fact that relabeling a group code with the $l_\infty$-metric don't change much the covering radius, when preserving the group structure. Finally, we improve the lower bound on the covering radius of $D_n$ given in the end of \cite{karni2018infinity}, by establishing a more precise estimation with a simple expression. The gap between this new lower bound and the trivial upper bound is one for almost all $n$.

The paper is organized as follows. In Section \ref{s:pre}, we introduce formal definitions and notations used through out this paper. Section \ref{s:gpq} is devoted to determination of the covering radius of the $(p,q)-$type group, while Section \ref{s:rel} deals with the relabelings of $(p,q)-$type group. In Section \ref{s:dn}, we give a better lower bound of covering radius of $D_ n$. Finally, we  conclude our results in Section \ref{s:con}.

\section{preliminary}\label{s:pre}
First, we give some useful notations and definitions. Some of them were introduced  in \cite{tamo2012labeling} and \cite{karni2018infinity}.

For integers $m\leq m'$, we denote $[m,m']\triangleq\{m,m+1\cdots,m'\}$, and $[m]\triangleq[1,m]$ for short. When applying this notation to the case $m>m'$, we have an empty set. Let $m\mm n$ denote the unique $r\in[n]$ such that $n$ divides $m-r$.
% We then define the cyclic interval
%\begin{displaymath}
%[m,m']\mm n\triangleq\{m \mm n,(m+1)\mm n,\ldots,m'\mm n\}.
%\end{displaymath}

The symmetric group of permutations over $[n]$ is denoted by $S_n$. For a permutation $f\in S_n$, we use either a one-line notation
for permutations, where $f = [ f_1 , f_2 , \ldots , f_n ]$ denotes a
permutation mapping $i \rightarrow f_i$ for all $i \in [ n ]$, or a cycle notation
$f = ( f_1 , f_2 , \ldots , f_k )$ where $f$ maps $f_i\rightarrow f_{( i + 1 )\mm k}$ for all $i\in [k]$.

The metric we consider in this work is $l_\infty$-metric, which is also called the Chebyshev metric. The distance function is defined by
\begin{displaymath}
d(f,g)\triangleq \max_{i\in[n]}|f(i)-g(i)|
\end{displaymath}for all $f,g\in S_n$. Note that $d$ is right invariant, but not left invariant, see e.g. \cite{DEZA1998METRICS}. For a subset $C\subseteq S_n$, and $f\in S_n$, we define the distance between $f$ and $C$ by
\[d(f,C)\triangleq \min_{g\in C}d(f,g).\]
\begin{definition}
An $(n,M,r)$ {\em covering code} is a subset $C\subseteq S_n$ such that $|C|=M$ and $d(f,C)\le r$ for all $f\in  S_n$. The {\em covering radius} of $C$ is the minimum integer $r$ such that $C$ is an $(n,M,r)$ covering code. We denote it by $r(C)$.
\end{definition}

One of the central problems in this area is to determine or estimate the covering radius $r(C)$ for a given code $C\subseteq S_n$. The most interesting case is when $C$ is a subgroup of $S_n$, for which we refer to $C$ as a {\em group code}. Since the distance function crucially depends on the permuted elements, the ``natural'' and ``relabeling'' descriptions of groups were considered in \cite{tamo2012labeling,karni2018infinity}.

\begin{definition}
For all $n\in \mathbb{N}$, the natural transitive cyclic group, denoted $G_n\subseteq S_n$, is the group generated by the permutation $(1,2,\ldots, n)$, i.e., $G_n\triangleq\left<(1,2,\ldots,n)\right>$. The natural dihedral group $D_n\subseteq S_n$, is defined by
\[D_n\triangleq \left< (1,2,\ldots, n-1,n),\prod_{i=1}^{\lfloor \frac{n}{2}\rfloor}(i,n-i)\right>.\]
\end{definition}

 In \cite{karni2018infinity}, the authors determined the covering radius of transitive cyclic group \[r(G_n)=n-\left\lfloor\frac{\sqrt{4n+1}+1}{2}\right\rfloor,\] and gave bounds for the covering radius of  natural dihedral group  $r(D_n)$. They also considered the non-natural transitive cyclic groups, and studied the covering radius of relabelings of $G_n$. In particular, they consider groups of the form \[G_n^h\triangleq h G_nh^{-1}\triangleq \left<h(1,2,\ldots,n)h^{-1}\right>=\left<(h(1),h(2),\ldots,h(n))\right>\subseteq S_n\] for some $h\in S_n$. Here, $G_n^h$ is called a {\em relabeling} of $G_n$ by conjugation of $h$. In general, we can define the relabeling of any code $C\subseteq S_n$ by \[C^{h}\triangleq hCh^{-1} = \{hgh^{-1}:g\in C\}.\]

\begin{definition}
Let $C\subseteq S_n$ be a covering code. Let $L_{max}(C)$ (respectively, $L_{min}(C)$) denote the maximum (respectively, minimum) achievable covering radius among all relabelings of $C$, i.e., $L_{max}(C)\triangleq \max_{h\in S_{n}}r(C^{h})$ and $L_{min}(C)\triangleq \min_{h\in S_{n}}r(C^{h})$.
\end{definition}

It was shown \cite{karni2018infinity} that \[L_{max}(G_n)=n-\left\lceil\frac{\sqrt{4n+1}-1}{2}\right\rceil\] and \[L_{min}(G_n)\geq n-\left\lceil\sqrt{2n\ln n+2n}\right\rceil.\] This indicates that  the
covering radius of the transitive cyclic group is quite robust under conjugation. While relabeling cannot reduce
the covering radius by much, the downside is that the
covering radius cannot be increased by more than one after
relabeling. A similar question has been asked in \cite{tamo2012labeling} for error-correcting codes $C\subseteq S_n$ and its relabelings $C^h$, but the result is quite different.  It was shown \cite{tamo2012labeling} that the minimum distance of a code
could drastically change due to relabeling, moving from the
minimum possible $1$, to the maximum possible $n-1$, for
some codes. Additionally, every error-correcting code could
be relabeled so that its minimum distance is reduced to
either $1$ or $2$.

%In this paper we focus on the problems of determining the covering radius of group codes. In fact, we determine exactly the covering radius of other cyclic groups (not transitive), which we call $(p,q)$-type group. We also determine the maximum covering radius of these groups achieved after relabelling. Finally, we establish a stronger lower bound for $r(D_n)$, which differ from the known upper bound by at most $2$.

Before closing this section, we define several notations commonly used through out this paper.  For any permutation $f\in S_n$ and any integer $j\in[n]$, the {\em position} or the {\em location} of $j$ in $f$ is the unique integer $i\in[n]$ such that $f(i)=j$.  For any $f,g\in S_n$, if $|f(i)-g(i)|\leq r$, then we say that $f$ is {\em $r$-covered by $g$ in position $i$}. Further if $f$ is $r$-covered by $g$ in each position $i\in [n]$, i.e., $d(f,g)\le r$, then we say that $f$ is \emph{$r$-covered} by $g$. On the other hand, if $d(f,g)> r$, then we say that $f$ is \emph{$r$-exposed} by $g$. More precisely, we say that the mapping {\em$i\to f(i)$ is $r$-exposed by $g$}, or $f$ is {\em $r$-exposed by $g$ in position $i$}, if $|f(i)-g(i)|>r$.
Similarly, for a code $C\subset S_n$, if $d(f,C)\le r$, then we say that $f$ is \emph{$r$-covered} by $C$, or $f$ is \emph{$(r,C)$-covered}; if not, then we say that $f$ is {\em $(r,C)$-exposed}.
The same terminologies can be defined to general vectors and codes.

 %The lemma reminds us that the covering radius of a group can have an explicit formula when $C$ is good enough. Additionally, the work also give out the explicit expression of another important parameter $L_{max}(G_n)$ to estimate the variation amplitude of the covering radius of $G_n$ under
%
%Questions come up to mind if there exists other sets has the property that the covering radius and $L_{max}(C)$ can be explicitly expressed? A trivial set fit the demands: $C=S_n$. Obviously that isn't what we want. As an error-correcting code the set should be sparse enough to get high distinguish ability, while if covering radius are fixed, the members in $C$ should be as much as possible to reach the best ability for information sending.
%
%To find all groups with explicit expressed covering radius is not an easy job. At present there are not too many group codes with this property are known and studied. The main content of the former part of this article is to introduce another group in $S_n$ which also possesses such good quality. Not only the covering radius,  $L_{max}$ can also be explicitly expressed. On the latter part, we will give a proof that  $r(D_n)$ will not drift too far away from $r(G_n)$.

\section{The Covering Radius of the (p,q)-Type Group}\label{s:gpq}
In this section, we determine the covering radius of a class of groups, which are direct products of two cyclic groups. Here we call it a  $(p,q)$-type group. Generally this type of group isn't transitive or cyclic, but it is cyclic when $\gcd(p,q)=1$. We apply similar idea as in \cite{karni2018infinity} to give an upper bound on the covering radius, and then determine the exact value by construction. %For convenience, let $n=p+q$ in this section.

 %In the work\cite{karni2018infinity} authors have had a deep look into the covering codes of transitive cyclic group, but not all cyclic groups are transitive. A large variety of them are free with such a property, among which the most simple kind we call it $(p,q)-$type. Instead of one big cycle with all $n$ elements, $(p,q)$-type is the direct product of two transitive cyclic groups. We therefore give the following definition:
\begin{definition}
For any $p,q\in\mathbb{N^+}$, the {\em natural $(p,q)$-type} group $G_{p,q}\subseteq S_{p+q}$, is defined by:
\begin{displaymath}
G_{p,q}\triangleq \left<(1,2,\cdots,p),(p+1,p+2,\cdots,p+q)\right>.
\end{displaymath}
\end{definition}

It is easy to see that $|G_{p,q}|=pq$. Without loss of generality, we assume $p\geq q$. For any permutation $g\in G_{p,q}$, if we know the values $g(i)$ and $g(j)$ for some $i\in [p]$ and $j\in [p+1, p+q]$, then we can get all values of $g(l)$ by the following way:
 \begin{equation*}
g(l)=\begin{dcases}
(g(i)+l-i) \mm p, &l\in[p];\\
p+ (g(j)-p+l-j) \mm q, &l\in[p+1,p+q].
\end{dcases}
\end{equation*}

We begin with a rough estimation of the covering radius of $G_{p,q}$. We claim that \[r(G_{p,q})\geq p.\] In fact, consider a permutation $f$ with $f(1)= p+q$ and $f(p+q)=1$ in $S_{p+q}$.  For any permutation $g$ in the group $G_{p,q}$, the largest possible value for $g(1)$ is $p$, while the smallest value that $g(p+q)$ can take is $p+1$. Consequently,  $\min_{g\in G_{p,q}}|f(1)-g(1)|=q$ and $\min_{g\in G_{p,q }}|f(p+q)-g(p+q)|=p$ deduce that $p$ is a lower bound of the covering radius. It's worth mentioning here that we can not obtain a lower bound from the known covering radius $r(G_n)$ and the group isomorphism $G_{p,q}\cong G_p\times G_q$, since the distance function concerns about the maximum absolute values of differences among all coordinates.

%Through out this section, we assume that $C=G_{p,q}$ for convenience. Some of the properties deduced later can be easily generalized to groups with the same structure as $G_{p,q}$.

 To give an estimate of the upper bound of $r(G_n)$, the authors in \cite{karni2018infinity} defined a set $A_{i\to f(i)}^{H}\triangleq \{h^{-1}(1): i\to f(i) \text{ is $r$-exposed by } h\in H\}$. This set plays a key role on recording all the permutations in $H$ which $r$-expose $f$ in position $i$. If $H$ is a transitive group of size $n$ and each permutation in $H$ is recorded by a set $A_{i\to f(i)}^{H}$ for some $i\in [n]$, then $f$ is $r$-exposed by $H$. In other words, if $\bigcup_{i\in [n]}A_{i\to f(i)}^{H}$ is a proper subset of $[n]$, then $f$ is $r$-covered by $H$. Here, we use similar idea. Since $G_{p,q}$ acts transitively on each subset $[p]$ and $[p+1,p+q]$, we define $A_{i\to f(i)}^C$ separately.

%  However this definition never works well anymore in new problem because of it's repetition. It cannot deduce a necessary and sufficient condition when using the similar method as the paper shows.  An extension of such a definition is needed. See $G_{p,q}$ as a direct product of $G_p\triangleq <(1,2,\ldots,p)>$ and $G_q\triangleq <(p+1,p+2,\ldots,p+q)>$. The corresponding relation is follows: $g\in G_{p,q}\to ((g(1),\ldots,g(p)),(g(p+1),\ldots,g(p+q)))$ and the latter one is a direct product of two expressions of permutation in one-line notation. Each of $G_i, i=p,q$ is actually a transitive cyclic group with the form of $G_n$.

 \begin{definition}\label{extended definition of A}
 Let $C=G_{p,q}$. For a permutation  $f\in S_{p+q}$, and $r\in [p,p+q]$, we define
 \begin{equation*}
A_{i\to f(i)}^C=\begin{dcases}
\{g^{-1}(1): i\to f(i) \text{ is $r$-exposed by  $g$ in $C$} \}, &i\in[p];\\
\{g^{-1}(p+1): i\to f(i) \textrm{ is $r$-exposed by  $g$ in $C$}\}, &i\in[p+1,p+q].
\end{dcases}
\end{equation*}

 \end{definition}

 We are only interested in those positions $i$ such that $A^C_{i\to f(i)}$ are not empty. Define  two sets \[B\triangleq [p+q-r-1]~~~~\text{    and    } ~~~~T\triangleq[r+2,p+q],\] as the bottom and top parts of $[p+q]$ for some implicit $r\geq p$.  Note that $B\subset[p]$, $T\subset[p+1,p+q]$, and $B\cap T=\emptyset$. It is easy to check that if $i\to f(i)$ is $r$-exposed by some $g\in C$, then \[f(i)\in T \text{ if }i\in [p],\text{ and }f(i)\in B\text{ if }i\in [p+1,p+q].\]

 \begin{lemma}\label{counting}
Let $C$ be $G_{p,q}$ and $r\in [p,p+q]$. Then for all $i,j\in[p+q]$,
\begin{equation*}
|A_{i\to j}^C|=\begin{dcases}
             p+q-r-j, &i>p\text{ and }j\in B,  \\
             j-r-1, &i\le p\text{ and }j\in T,\\
             0, &\text{otherwise}.
             \end{dcases}
\end{equation*}
\end{lemma}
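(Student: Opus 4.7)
The plan is to reduce the problem to a bookkeeping exercise that exploits the product structure of $G_{p,q}$. Every $g\in G_{p,q}$ restricts to a cyclic permutation of $[p]$ and a cyclic permutation of $[p+1,p+q]$, acting independently on the two blocks. In particular, for any fixed $i\in[p]$, the value $g(i)$ uniquely determines $g|_{[p]}$, and hence $g^{-1}(1)\in[p]$; indeed the formula recalled in the paper gives $g^{-1}(1)=(i-g(i)+1)\mm p$, which is a bijection $[p]\to[p]$. An analogous bijection holds between $g(i)$ and $g^{-1}(p+1)$ for $i\in[p+1,p+q]$. So counting $|A^C_{i\to j}|$ is equivalent to counting the admissible values of $g(i)$.

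Before the case analysis, I would record the basic containments implied by $r\geq p\geq q$: namely $B\subseteq[q-1]\subseteq[p]$, $T\subseteq[p+2,p+q]\subseteq[p+1,p+q]$, and $B\cap T=\emptyset$. These will ensure that the range bounds produced below do not extend beyond the permissible block.

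Next I would split into four cases. When $i\in[p]$ and $j\in T$, one has $g(i)\in[p]$ and $j>p\geq g(i)$, so the $r$-exposure condition $|g(i)-j|>r$ collapses to $g(i)\leq j-r-1$. Since $j-r-1\leq q-1<p$, all such values are valid, yielding exactly $j-r-1$ admissible $g(i)$ and hence $|A^C_{i\to j}|=j-r-1$. The symmetric case $i\in[p+1,p+q]$, $j\in B$ gives $g(i)\geq j+r+1$ with $j+r+1\geq p+2$, producing $p+q-r-j$ admissible values. For the complementary cases ($i\in[p]$ with $j\notin T$, or $i\in[p+1,p+q]$ with $j\notin B$), I would just verify from $r\geq p$ that $|g(i)-j|\leq r$ is forced for every choice of $g(i)$ in the appropriate block, whence $A^C_{i\to j}=\emptyset$.

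The argument contains no deep idea; the one obstacle is purely arithmetic, namely keeping all the range inequalities tight enough to confirm both that the counted $g(i)$-values actually lie in $[p]$ or $[p+1,p+q]$ and that no extra $r$-exposing values are missed. The bijection between $g(i)$ and $g^{-1}(1)$ (resp.\ $g^{-1}(p+1)$) then transfers the counts to the cardinalities claimed in the lemma.
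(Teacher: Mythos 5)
Your proposal is correct and takes essentially the same route as the paper: both reduce the count to the number of admissible values of $g(i)$ in the relevant block (namely $[j+r+1,p+q]$ or $[1,j-r-1]$) and then transfer that count to $|A^C_{i\to j}|$ via the fact that $g^{-1}(p+1)$ (resp. $g^{-1}(1)$) is determined bijectively by $g(i)$ through the cyclic structure. Your explicit range checks for the degenerate cases are slightly more detailed than the paper's "the last case is trivial," but the substance is identical.
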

\begin{proof}
If $i>p$, then for each $g\in C$, we have $g(i)\in [p+1,p+q]$. If $j\in B$, then $i\to j$ is $r$-exposed by all permutations $g\in C$ with $g(i)\in [j+r+1, p+q]$. Since $G_{p,q}$ has the direct product structure, and acts transitively on $[p+1,p+q]$, there are exactly $p(p+q-r-j)$ permutations $g\in C$ which $r$-expose the map $i\to j$. These permutations can be partitioned into $p+q-r-j$ parts, each consisting of $p$ permutations, which have the same images on the set $[p+1,p+q]$. Each part has a distinct location for the element $p+1$. So $|A_{i\to j}^C|=p+q-r-j$ in this case.

If $i\leq p$ and $j\in T$, then  $i\to j$ is $r$-exposed by  $g\in C$ if and only if $g(i)\in [1, j-r-1]$. By similar arguments, we have $|A_{i\to j}^C|=j-r-1$.

The last case is trivial.
\end{proof}

%Note that if $f$ is $r$-exposed by $g\in C$ in position $i$, then $|f(i)-g(i)|>r$.
% Initially this is a constrainment to what $f(i)$ can be. For example, if $i>p+1$, because of the definition of $G_{p,q}$, for any $h\in C$, $h(i)$ must be in $T$. Correspondingly, $f(i)$ is required to be in $B$ to avoid the emptiness of $A^C_{i\to f(i)}$.

 \begin{lemma}\label{iff}
Let $f\in S_{p+q}$ be any permutation and $C=G_{p,q}$. Then  $f$ is $r$-exposed by $C$ if and only if at least one of the conditions $[p]= \bigcup_{i\in [p]} A^C_{i \to f(i)}$ and $[p+1,p+q]=\bigcup_{i\in [p+1,p+q]} A^C_{i \to f(i)}$ holds.
\end{lemma}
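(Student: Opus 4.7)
The plan is to exploit the direct-product structure of $G_{p,q}$ to reduce the statement to a combinatorial statement about covering a product set.

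First I would observe that every $g \in C = G_{p,q}$ preserves each of the two blocks $[p]$ and $[p+1,p+q]$ setwise, and is in fact uniquely determined by the pair
\[
\bigl(g^{-1}(1),\, g^{-1}(p+1)\bigr) \in [p]\times[p+1,p+q],
\]
and conversely every such pair arises from exactly one $g\in C$ (since $|C|=pq$). In particular, when $i\in[p]$ the value $g(i)$ depends only on $g^{-1}(1)$, so by Definition \ref{extended definition of A} we have
\[
i\to f(i)\text{ is $r$-exposed by $g$}\ \Longleftrightarrow\ g^{-1}(1)\in A^C_{i\to f(i)}\subseteq[p],
\]
and symmetrically, for $i\in[p+1,p+q]$, $i\to f(i)$ is $r$-exposed by $g$ iff $g^{-1}(p+1)\in A^C_{i\to f(i)}\subseteq[p+1,p+q]$.

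Next, write $U\triangleq\bigcup_{i\in[p]} A^C_{i\to f(i)}\subseteq[p]$ and $V\triangleq\bigcup_{i\in[p+1,p+q]} A^C_{i\to f(i)}\subseteq[p+1,p+q]$. By definition, $f$ is $r$-exposed by $C$ iff for every $g\in C$ there is some position $i\in[p+q]$ at which $i\to f(i)$ is $r$-exposed by $g$. Using the equivalence above, this reformulates as: for every pair $(a,b)\in[p]\times[p+1,p+q]$, either $a\in U$ or $b\in V$.

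The last step is the small combinatorial observation that the product condition ``$a\in U$ or $b\in V$ for every $(a,b)$'' is equivalent to ``$U=[p]$ or $V=[p+1,p+q]$''. Indeed, if both containments fail, picking $a\in[p]\setminus U$ and $b\in[p+1,p+q]\setminus V$ and taking the unique $g\in C$ with $g^{-1}(1)=a$ and $g^{-1}(p+1)=b$ produces a permutation of $C$ that $r$-covers $f$, contradicting the exposure condition; the converse implication is immediate. This yields exactly the two alternatives in the statement. I do not anticipate a genuine obstacle here; the main thing to be careful about is correctly tracking that $A^C_{i\to f(i)}$ really records $g^{-1}(1)$ (resp.\ $g^{-1}(p+1)$) rather than $g$ itself, so that the product-structure argument applies cleanly.
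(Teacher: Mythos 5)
Your proof is correct and follows essentially the same route as the paper: both arguments rest on the observation that exposure at a position in $[p]$ (resp.\ $[p+1,p+q]$) depends only on $g^{-1}(1)$ (resp.\ $g^{-1}(p+1)$), and both obtain the nontrivial direction by picking an element missing from each union and assembling the unique $g\in C$ realizing that pair, which then $r$-covers $f$. Your reformulation as a covering condition on the product set $[p]\times[p+1,p+q]$ is just a cleaner packaging of the paper's two-part (sufficiency plus contradiction) argument.
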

\begin{proof} For the sufficiency, we only prove it when $[p]= \bigcup_{i\in [p]} A^C_{i \to f(i)}$, the other case is similar. If $f$ is $r$-exposed by some  $g\in C$ in position $i\in [p]$, then $f$ is $r$-exposed by all the $q$ permutations $g'\in C$ satisfying $g'(j)=g(j)$ for all $j\in [p]$. Since $|C|=pq$, it follows that every $g\in C$ $r$-expose $f$, hence $f$ is $(r, C)$-exposed.

For the other direction, we prove it by contradiction. If $[p]\neq \bigcup_{i\in [p]} A^C_{i \to f(i)}$, then due to the transitivity action of $G_{p,q}$ on  $[p]$, there is a permutation $g_1\in C$ such that $f$ is not $r$-exposed by $g_1$ in any position $i\in [p]$, or equivalently, $f$ is $r$-covered by $g_1$ in each position $i\in [p]$.  Similarly,  $[p+1,p+q]\neq\bigcup_{i\in [p+1,p+q]} A^C_{i \to f(i)}$ implies that there exists a permutation $g_2\in C$ such that $f$ is $r$-covered by $g_2$ in each position $i\in [p+1,p+q]$. Now we define a function $g$ as follows:  $g(i)=g_1(i)$ when $i\in[p]$ and $g(i)=g_2(i)$ otherwise. Since  $g_1, g_2\in C$, we have that $g$ is also a permutation in $C$. Since $f$ is $r$-covered by $g\in C$ in all positions, $f$ is $r$-covered by $C$, which is a contradiction.
\end{proof}

% Now we have a deeper look into sets $A$:

Now we are ready to give an upper bound of covering radius of $G_{p,q}$.
\begin{lemma}\label{upper}
For all $p,q\in \mathbb{N^+}$ and $p\geq q$,
\begin{equation*}
r\left(G_{p,q}\right)\leq p+\left\lfloor \left(\sqrt{q+\frac{1}{8}}-\frac{\sqrt{2}}{2}\right)^2-\frac{1}{8} \right\rfloor.
\end{equation*}
\end{lemma}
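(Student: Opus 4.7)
The plan is to argue by contradiction via Lemma~\ref{iff}. Put $r \triangleq p + t$ with $t \triangleq \lfloor (\sqrt{q+1/8}-\sqrt{2}/2)^2 - 1/8 \rfloor$, and suppose for contradiction that some $f \in S_{p+q}$ were $r$-exposed by $C = G_{p,q}$. Lemma~\ref{iff} would then force at least one of the equalities $[p] = \bigcup_{i\in[p]} A^C_{i\to f(i)}$ or $[p+1,p+q] = \bigcup_{i\in[p+1,p+q]} A^C_{i\to f(i)}$ to hold, and hence, summing cardinalities, would yield either $\sum_{i\in[p]}|A^C_{i\to f(i)}| \geq p$ or $\sum_{i\in[p+1,p+q]}|A^C_{i\to f(i)}| \geq q$. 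My strategy is to bound both sums from above using Lemma~\ref{counting} and rule out both inequalities simultaneously.

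For the upper bound, set $m \triangleq p+q-r-1$, so that $|B|=|T|=m$. By Lemma~\ref{counting}, only positions $i\in[p]$ with $f(i)\in T$ contribute to the first sum, and each contributes $f(i)-r-1 \in [1,m]$; only positions $i\in[p+1,p+q]$ with $f(i)\in B$ contribute to the second, and each contributes $p+q-r-f(i)\in [1,m]$. Because $f$ is a permutation, the relevant values $f(i)$ in each sum are distinct elements of a set of size $m$ (namely $T$ or $B$), so at most $m$ nonzero terms appear and each sum is bounded crudely by
\begin{equation*}
\sum_{i} |A^C_{i\to f(i)}| \;\leq\; \sum_{j\in T}(j-r-1) \;=\; \sum_{\ell=1}^{m} \ell \;=\; \frac{m(m+1)}{2},
\end{equation*}
with the analogous computation handling the $B$-sum.

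Combining, at least one of $m(m+1)/2 \geq p$ or $m(m+1)/2 \geq q$ must hold; since $p\geq q$, both cases entail $m(m+1)/2 \geq q$. The contrapositive says it suffices to choose $r$ so that $m(m+1)/2 < q$, equivalently $m < (\sqrt{8q+1}-1)/2$, i.e., $r > p + q - \tfrac{1}{2} - \tfrac{1}{2}\sqrt{8q+1}$. The smallest such integer is $r = p + \lfloor q + \tfrac12 - \tfrac12\sqrt{8q+1}\rfloor$, and the algebraic identity
\begin{equation*}
\left(\sqrt{q+\tfrac18}-\tfrac{\sqrt{2}}{2}\right)^{\!2} - \tfrac18 \;=\; q + \tfrac12 - \tfrac{\sqrt{8q+1}}{2}
\end{equation*}
rewrites that floor exactly as the expression in the statement, giving $r(G_{p,q}) \leq p + t$ as desired.

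The step I expect to require the most care is the maximization: I must ensure that bounding each sum by $m(m+1)/2$ is legitimate, without imposing any artificial compatibility between the two halves of $f$. Because Lemma~\ref{iff} only requires \emph{one} of the two union-conditions to hold, and because the bound on each sum uses only the distinctness of $f$-values within one of $T$ or $B$, the two halves decouple and no global constraint on $f$ as a permutation of $[p+q]$ ever enters the counting. The remaining work is the routine algebraic rearrangement to recover the closed-form expression in the statement.
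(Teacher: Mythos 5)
Your proposal is correct and follows essentially the same route as the paper: bound each of the two union-cardinalities by $\sum_j |A^C_{i\to j}|$ over $T$ (resp.\ $B$) via Lemma~\ref{counting}, obtaining $(p+q-r)(p+q-r-1)/2$, then invoke Lemma~\ref{iff} and take the smallest integer $r$ with this quantity below $q=\min\{p,q\}$. The only cosmetic difference is that you phrase the final step as a contradiction rather than as the direct contrapositive; the counting and the algebra are identical.
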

\begin{proof}
Let $f\in S_{p+q}$ be any permutation. By Lemma~\ref{counting}, we get

\begin{align*} \left |{\bigcup _{i\in [p]} A^C_{i\mapsto f(i)}}\right |\leq&\sum _{i\in [p]} \left |{A^C_{i\mapsto f(i)}}\right | \notag \\=&\sum_{i\in [p]} \max\left\{f(i)-r-1,0\right\} \\\leq &\sum_{j=r+2}^{p+q} \left(j-r-1\right) \\=&\frac{(p+q-r)(p+q-r-1)}{2}. \end{align*}

Similarly, we get

\begin{align*} \left |{\bigcup _{i\in [p+1,p+q]} A^C_{i\mapsto f(i)}}\right |\leq&\sum _{ {i\in [p+1,p+q]}} \left |{A^C_{i\mapsto f(i)}}\right | \notag \\=&\sum_{ {i\in [p+1,p+q]}} \max\left\{p+q-r-f(i),0\right\} \\\leq &\sum_{j=1}^{p+q-r-1} (p+q-r-j) \\=&\frac{(p+q-r)(p+q-r-1)}{2}. \end{align*}

By Lemma~\ref{iff}, if
\begin{equation*}
\frac{(p+q-r)(p+q-r-1)}{2}<\min\left\{p,q\right\}=q,
\end{equation*}
then $f$ is $(r,G_{p,q})$-covered. The smallest integer $r$ satisfying the above inequality  is \[\widetilde{r}=p+\left\lfloor \left(\sqrt{q+\frac{1}{8}}-\frac{\sqrt{2}}{2}\right)^2-\frac{1}{8} \right\rfloor. \] Since any permutation $f\in S_n$ is $(\widetilde{r},G_{p,q})$-covered, we have $r(G_{p,q})\leq \widetilde{r}$.

\end{proof}

 The following theorem shows that the upper bound in Lemma~\ref{upper} can be achieved.  We  prove it by giving a constructive lower bound.

\begin{theorem}\label{rGpq}
For all $p,q\in \mathbb{N^+}$ and $p\geq q$,
\begin{equation*}
r\left(G_{p,q}\right)= p+\left\lfloor \left(\sqrt{q+\frac{1}{8}}-\frac{\sqrt{2}}{2}\right)^2-\frac{1}{8} \right\rfloor.
\end{equation*}
\end{theorem}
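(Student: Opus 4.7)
The upper bound $r(G_{p,q}) \leq \tilde r$ is Lemma~\ref{upper}, with $\tilde r$ denoting the right-hand side; the task is to match this with a constructive lower bound. For $q \in \{1,2\}$ the floor term vanishes, so $\tilde r = p$, which is already covered by the trivial lower bound $r(G_{p,q}) \geq p$ noted at the start of this section. Assume from now on that $\tilde r > p$, put $r_0 = \tilde r - 1 \geq p$, and set $s = p + q - r_0 - 1 = p + q - \tilde r$. From the minimality of $\tilde r$ in Lemma~\ref{upper} one has $s(s+1)/2 \geq q$, while $r_0 \geq p$ gives $s \leq q - 1$. By Lemma~\ref{iff}, it suffices to exhibit $f \in S_{p+q}$ with $[p+1, p+q] = \bigcup_{i \in [p+1, p+q]} A^C_{i \to f(i)}$ for $C = G_{p,q}$ at radius $r_0$, since then $f$ is $(r_0, G_{p,q})$-exposed and $r(G_{p,q}) \geq r_0 + 1 = \tilde r$.

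The heart of the argument is a concrete description of each $A^C_{i \to j}$ for $i \in [p+1, p+q]$ and $j \in B$. Parameterising $g \in G_{p,q}$ on $[p+1, p+q]$ by $i_0 = g^{-1}(p+1)$, the condition that $i \to j$ is $r_0$-exposed by $g$ is $g(i) \geq r_0 + j + 1$, which through the cyclic action of $\langle (p+1, \ldots, p+q) \rangle$ translates into $i_0$ lying in a cyclic interval in $[p+1, p+q]$ (with $p+q$ followed by $p+1$) that starts at position $i + 1$ and has length $\ell_j = p + q - r_0 - j$. Consequently, assigning $f(i) = v$ for some $v \in [1, s]$ and $i \in [p+1, p+q]$ contributes to the union exactly the cyclic arc of length $\ell_v$ starting at $i + 1$, and the lengths $\ell_1, \ldots, \ell_s$ run through $s, s-1, \ldots, 1$.

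Now place arcs greedily end-to-end on the cycle $[p+1, p+q]$. Set $a_1 = p + 1$ and inductively $a_{v+1} = a_v + \ell_v$ for $v = 1, 2, \ldots$, stopping at the first index $v^*$ with $\sum_{w=1}^{v^*} \ell_w \geq q$. Since $\sum_{w < v^*} \ell_w < q$, the positions $a_1 < a_2 < \cdots < a_{v^*}$ all lie in $[p+1, p+q]$ without any cyclic reduction, and the $v^*$ arcs placed end-to-end (the last possibly wrapping) cover the whole cycle. For $v > v^*$ (possibly none) let $a_v$ be any as-yet unused element of $[p+1, p+q]$, which is possible as $s \leq q - 1$. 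Finally define $f(a_v - 1) = v$ for $v = 1, \ldots, s$ (with $a_v - 1$ interpreted cyclically in $[p+1, p+q]$) and extend $f$ arbitrarily to a permutation of $[p+q]$. By the previous paragraph, $A^C_{a_v - 1 \to v}$ is the cyclic arc of length $\ell_v$ starting at $a_v$, so the $A^C_{a_v - 1 \to v}$ collectively cover $[p+1, p+q]$, as required.

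The main difficulty is the arc identification at the start of the second paragraph: one must carefully untangle the cyclic action of the generator $(p+1, \ldots, p+q)$ and verify that $A^C_{i \to j}$ is an interval (rather than a scattered set), with the stated starting point $i + 1$. Once that is in hand, the greedy placement is an easy exercise, since $s(s+1)/2 \geq q$ guarantees that the cycle is covered before index $s$, and $s \leq q - 1$ leaves enough unused positions for any leftover values.
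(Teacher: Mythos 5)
Your proposal is correct and follows essentially the same route as the paper: the paper's location sequence $\lambda(i)=k(i-1)-\binom{i}{2}$ is exactly your greedy end-to-end arc placement (with $a_v-1=p+\lambda(v)$ and $v^*=I$), and both arguments rest on the same interval description of $A^C_{i\to j}$ together with Lemma~\ref{iff}. Your cyclic-arc phrasing is a clean repackaging, but the construction and verification are the same.
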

\begin{proof} By simple verification, $r\left(G_{p,q}\right)=p$ when $q=1$ and $2$, agreeing with the claim. Therefore, we assume that $q\geq 3$.

For convenience, let $r_0=p+\left\lfloor \left(\sqrt{q+\frac{1}{8}}-\frac{\sqrt{2}}{2}\right)^2-\frac{1}{8} \right\rfloor-1$.
By Lemma~\ref{upper}, it suffices to show that there exists a permutation $f_0 \in S_{p+q}$, such that $f_0$ is $r_0$-exposed by $G_{p,q}$, i.e., for any permutation $g \in G_{p,q}$, we have $d(f_0 ,g)>r_0$.

Let $k=p+q-r_0>1$. Before  constructing the permutation $f_0\in S_{p+q}$, we define a sequence of numbers $\lambda(i)$, $i\in [k]$, as follows:

\begin{equation*}
\lambda(i)=\begin{dcases}
             q, &i=1,  \\
             k(i-1)-{i \choose 2}, &i\in [2, k].
             \end{dcases}
\end{equation*}
We study some properties of this sequence. First, $\lambda(2)=k-1=q-\left\lfloor q+\frac{1}{2}-\sqrt{2q+\frac{1}{4}}\right\rfloor<q$. Since $\lambda(i+1)-\lambda(i)=k-i$, the sequence $\lambda(i)$ is strictly increasing when $i\in [2, k]$. Now we want to know how large is $\lambda(k)$.  From the proof of Lemma~\ref{upper}, we know that $r_0+1$ is the smallest integer satisfying the inequality
\begin{displaymath}
\frac{(p+q-r)(p+q-r-1)}{2}<q.
\end{displaymath}
So $\lambda(k)=k(k-1)/2=(p+q-r_0)(p+q-r_0-1)/2\geq q$. This means the sequence $\lambda(i)$, $i\in [k]$ ends at a number at least $q$.

Since  $\lambda(2)<q$, we  denote $I$  the largest number $i\in[2,k]$ such that $\lambda(i)<q$. Note that $I<k$. Now we give the permutation $f_0\in S_{p+q}$ by defining values on some selected positions.
 \begin{equation}\label{pqf0}
f_0(j)= \begin{dcases}
            i, &  \textrm{if } j=p+\lambda(i)  \textrm{ for } i\in [I],  \\
             \textrm{arbitrary}, &\textrm{otherwise}.
             \end{dcases}
\end{equation}
It is easy to check that the permutation $f_0$ is well defined.

Let $C=G_{p,q}$. We next check that for any permutation $g\in C$, we have  $d(f_0,g)>r_0$, i.e., $f_0$ is $r_0$-exposed by $C$. By Lemma~\ref{iff}, it is sufficient to prove that $[p+1,p+q]= \cup_{j\in[p+1,p+q]}A_{j\to f_0(j)}^{C}$. Here, it is enough to check the union of $A_{j\to f_0(j)}^{C}$ when $j$ is defined in (\ref{pqf0}).

For each selected position $j=p+\lambda(i)$,  $i\in [I]$, we first find out the possible values $g(j)$ such that $j\to i$ is $r_0$-exposed by $g$. Once the values $g(j)$ are fixed, for some $j\in [p+1,p+q]$,  the locations of the element $p+1$ in these permutations $g\in G_{p,q}$ can be determined easily.  When $i=1$, $f_0(p+q)=1$, so $g(p+q)\in [r_0+1,p+q]$. Hence \[A^C_{p+q\to 1}= [p+q-r_0-1] +p=[\lambda(2)]+p.\] Here, $S+x\triangleq\{a+x: a\in S\}$ for any set $S$ and element $x$.
 When $i\in [2, I]$,  $f_0(j)=i$, we have $g(j)\in [r_0+i+1,p+q]$, which is well defined due to the fact that  $i<k$. If $g(p+\lambda(i))=p+q$, then $g(p+\lambda(i)+1)=p+1$; in general if  $g(p+\lambda(i))=p+q-t$, $t\leq p+q-r_0-i-1$, then $g(p+l)=p+1$, where $l=(\lambda(i)+t+1)\mm q$. Note that when $t=p+q-r_0-i-1$, $l=\lambda(i)+p+q-r_0-i=\lambda(i+1)<q$ when $i<I$. Hence, we have for $i\in [I-1]$,
 \[A^C_{p+\lambda(i)\to i}=[\lambda(i)+1,\lambda(i+1)] +p.\]
 When $i=I$, since $\lambda(i+1)\geq q$, we have $[\lambda(i)+1,q] +p\subset A^C_{p+\lambda(i)\to i}$.

Finally, combining all pieces together, we have
\begin{align*} [p+1,p+q] \supseteq & \bigcup_{j\in[p+1,p+q]}A_{j\to f_0(j)}^{C}\supseteq \bigcup_{i\in[I]}A_{p+\lambda(i)\to i}^C \\\supseteq & [\lambda(2)]\cup[\lambda(2)+1,\lambda(3)]\cup\cdots\cup[\lambda(I)+1,q]+p \\=&[p+1,p+q], \end{align*} which completes the proof.
\end{proof}

\section{Relabeling the (p,q)-Type Group}\label{s:rel}
The labelling problem of permutation group codes was introduced by Tamo and Schwartz \cite{tamo2012labeling} when considering the following scenario. If $C$ and $C'$ are conjugate subgroups of the symmetric group, then from a group-theoretic point of view, they are almost the same algebraic object, which may share the same encoding or even decoding algorithm. However, from a coding point of view, these two codes can possess vastly different minimal distance, which is one of the most important properties of a code. Hence, given a certain group code, a labelling problem is to choose an isomorphic conjugate of the group, having the same group-theoretic structure, but with higher minimal distance. It was shown \cite{tamo2012labeling} that the minimum $l_\infty$-distance of some codes
could move from the
minimum possible $1$, to the maximum possible $n-1$ after relabeling.

In \cite{karni2018infinity}, the authors considered the labelling problem for the covering radius of permutation codes $G_n$. In contrast to the variety of minimal distance, they showed that the covering radii of transitive cyclic groups are quite robust. In particular, the maximum radius after relabeling is $n-\lceil\frac{\sqrt{4n+1}-1}{2}\rceil$, which does not  increase the value of $r(G_n)$ by more than one, while the minimum radius can neither reduce $r(G_n)$ by much.

In this section, we study the covering radius of the $(p,q)$-type groups after relabeling. Let $C=G_{p,q}$, recall that a relabeling of $C$ by conjugation of $\pi\in S _{p+q}$ is defined as $C^{\pi}\triangleq \pi C\pi^{-1} = \{\pi g\pi ^{-1}:g\in C\}$, and the maximum and  minimum radii after relabeling  are denoted by $L_{max}(C)\triangleq \max_{\pi\in S_{p+q}}r(C^{\pi})$ and $L_{min}(C)\triangleq \min_{\pi\in S_{p+q}}r(C^{\pi})$, respectively.

%Not satisfied by knowing the covering radius of natural $(p,q)$-type cyclic groups, we tempts to answer what will happen when we take a non-natural one. At least we hope to maintain the group structure and the permutation cycle structure, so we consider the conjugacy of the nature code. It's well-known that two permutations in $S_n$ shares the same number of cycles and cycle structure if and only if they are conjugate with each other. Covering codes formed by conjugate $(p,q)$-type is called \emph{the relabeling of $(p,q)$-type}.
%\begin{definition}
%For any subset $C$ of the permutation group $S_n$, A relabeling of $C$ relating to $h\in S _n$ is defined as
%\begin{equation}
%C^{h}\triangleq hCh^{-1} = \{hgh^{-1}:g\in C\}.
%\end{equation}
%\end{definition}

Note that for each $\pi\in S_{p+q}$,  $C^{\pi}=\left<(\pi(1),\pi(2),\ldots,\pi(p)),
 (\pi(p+1),\ldots,\pi(p+q))\right>$. The labeled permutation in $C^{\pi}$ has the same cycle structure as in $C$ but the elements within each cycle are relabeled by $\pi$. Denote \[H\triangleq\{\pi(1),\ldots,\pi(p)\}\text{ and  }R\triangleq\{\pi(p+1),\ldots,\pi(p+q)\}.\] For convenience, we view the set $C^{\pi}$ as a direct product of the following two circulant arrays $\H$ and $\R$ over alphabets $H$ and $R$,  with specified sets of locations $H$ and $R$, respectively, as shown in Fig~\ref{tab1}. Note that for each $i\in [q]$,  there are $p$ permutations $g$ in $C^{\pi}$  that are consistent with the $i$th row of $\R$, namely, $g(l)=\R(i,l)$ for all $l\in R$.  Similarly, for each $j\in [p]$, there are $q$ permutations $g$ in $C^{\pi}$  that are consistent with the $j$th row of $\H$, namely, $g(l)=\H(j,l)$ for all $l\in H$.

\begin{figure}[h]
\[\begin{array}{ccccccccccccc}
\multicolumn{6}{ c }{\text{location indices of $\H$}} && \multicolumn{6}{ c }{\text{location indices of $\R$}} \\
& \pi(1) & \pi(2)& \cdots& \pi(p-1) & \pi(p)&&& \pi(p+1) & \pi(p+2)& \cdots & \pi(p+q-1)& \pi(p+q)\\
\cline{2-6}\cline{9-13}
\multirow{4}{*}{$\H$:}
& \multicolumn{1}{|c  }{\pi(1)} & \pi(2)& \cdots & \pi(p-1) &  \multicolumn{1}{c | }{\pi(p)}& &\multirow{4}{*}{$\R$:}  & \multicolumn{1}{|c  }{\pi(p+1)} & \pi(p+2)& \cdots & \pi(p+q-1)& \multicolumn{1}{c | }{\pi(p+q)}\\
& \multicolumn{1}{|c  }{\pi(2)} & \pi(3)& \cdots & \pi(p) & \multicolumn{1}{c | }{\pi(1)}& && \multicolumn{1}{|c  }{\pi(p+2)} & \pi(p+3)& \cdots & \pi(p+q)& \multicolumn{1}{c | }{\pi(p+1)}\\
& \multicolumn{1}{|c  }{\vdots} & \vdots& \cdots & \vdots& \multicolumn{1}{c | }{\vdots}&& &  \multicolumn{1}{|c  }{\vdots} & \vdots& \cdots & \vdots& \multicolumn{1}{c | }{\vdots} \\
& \multicolumn{1}{|c  }{\pi(p)} & \pi(1)& \cdots & \pi(p-2) & \multicolumn{1}{c | }{\pi(p-1)}& &&  \multicolumn{1}{|c  }{\pi(p+q)} & \pi(1)& \cdots & \pi(p+q-2)& \multicolumn{1}{c | }{\pi(p+q-1)}\\
\cline{2-6}\cline{9-13}
\end{array}\]
    \caption{The arrays $\H$ and $\R$ are shown in the boxes. The numbers above the boxes are the corresponding locations. The notation $\R(i,l)$ means the entry in the $l$th position of the $i$th row in $\R$. For example, $\R(2,\pi(p+2))=\pi(p+3)$.}
     \label{tab1}
\end{figure}

   We will determine the maximum covering radius $L_{max}(C)$ exactly by using similar technique as in Section~\ref{s:gpq}.
 First, we need a definition of $A$ similar to Def~\ref{extended definition of A}.

  \begin{definition} Let $C=G_{p,q}$. For any permutations $f$ and $\pi$ in $ S_{p+q}$, $i\in[p+q]$, let
   \begin{equation*}
A_{i\to f(i)}^{C^\pi}=\begin{dcases}
\{g^{-1}(\pi(1)): i\to f(i) \text{ is $r$-exposed by  $g$ in $C^\pi$} \}, &i\in H;\\
\{g^{-1}(\pi(p+1)): i\to f(i) \textrm{ is $r$-exposed by  $g$ in $C^\pi$}\}, &i\in R.
\end{dcases}
\end{equation*}
  \end{definition}

Simple observations that are similar to Lemmas~\ref{counting} and~\ref{iff}, are given below without proof.  Recall that  $B\triangleq [p+q-r-1]$    and   $T\triangleq[r+2,p+q]$. Here, the dependence of $B$ and $T$ on $p$, $q$ and $r$ is implicit.

\begin{lemma}\label{AA}
Let $p$, $q$ and $r$ be integers such that $p\geq q$ and $\frac{p+q}{2}-1<r<p+q$. Let $C=G_{p,q}$. For any $\pi\in S_{p+q}$, we have
\begin{equation*}
|A_{i\to j}^{C^\pi}|\leq \begin{dcases}
             p+q-r-j, &j\in B,  \\
             j-r-1, &j\in T,\\
             0, &\text{otherwise}.
             \end{dcases}
\end{equation*}
\end{lemma}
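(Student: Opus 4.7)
The plan is to mirror the argument of Lemma \ref{counting}, but to track the fact that relabeling replaces the two blocks $[p]$ and $[p+1,p+q]$ by the possibly ``spread out'' sets $H$ and $R$, which may intersect $B$ and $T$ in an arbitrary way. The key observation I would exploit is that $C^\pi$ still has the direct-product structure of two cyclic rotations, one on $H$ and one on $R$. In particular, for any $i\in H$ the map $g\mapsto g(i)$ is a bijection from the set of $H$-rotations onto $H$, and each such rotation determines $g^{-1}(\pi(1))$ uniquely (the $R$-rotation is irrelevant). The same holds for $i\in R$ with $\pi(p+1)$ in place of $\pi(1)$.

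First I would fix $i\in H$; the case $i\in R$ is completely symmetric. Although each value $g(i)=h\in H$ is attained by exactly $q$ permutations of $C^\pi$ (one for each choice of $R$-rotation), all of them share the same value of $g^{-1}(\pi(1))$. Hence the map $g\mapsto g^{-1}(\pi(1))$ factors through $g(i)$, and
\[
\bigl|A_{i\to j}^{C^\pi}\bigr|=\bigl|\{h\in H:|h-j|>r\}\bigr|.
\]

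Next I would use the hypothesis $\tfrac{p+q}{2}-1<r$ to separate the two exposure sides. This condition forces $2r+2>p+q$, so for any $j\in[p+q]$ at most one of $h>j+r$ and $h<j-r$ can hold with $h\in[p+q]$; moreover the first inequality has a solution in $[p+q]$ only when $j\le p+q-r-1$, that is $j\in B$, and the second only when $j\ge r+2$, that is $j\in T$. Since $B\cap T=\emptyset$ under this hypothesis, every $j$ falls into at most one of the two regimes, and any $j\notin B\cup T$ yields $|A_{i\to j}^{C^\pi}|=0$.

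Finally I would pass from $H$ to the ambient interval: $|H\cap[j+r+1,p+q]|\le p+q-r-j$ for $j\in B$ and $|H\cap[1,j-r-1]|\le j-r-1$ for $j\in T$. Repeating the argument with $R$ in place of $H$ produces the identical numerical bounds, since they depend only on the length of the ambient interval and not on $|H|$ or $|R|$. I do not expect a real obstacle here; the only subtlety worth flagging is that, unlike in Lemma \ref{counting}, equality need not hold, because an irregular placement of $\pi$ can make $H\cap[\,\cdot\,]$ or $R\cap[\,\cdot\,]$ strictly smaller than the ambient interval. That is precisely why the conclusion of Lemma \ref{AA} is an inequality rather than an equality.
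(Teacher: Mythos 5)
Your proof is correct and takes essentially the approach the paper intends: the paper omits the proof of Lemma~\ref{AA}, stating only that it is a ``simple observation'' analogous to Lemma~\ref{counting}, and your argument is exactly that adaptation --- reducing $|A_{i\to j}^{C^\pi}|$ to $|H\cap[j+r+1,p+q]|$ or $|H\cap[1,j-r-1]|$ (resp.\ with $R$) via the bijection between rotations and values $g(i)$, and bounding the intersection by the interval length. Your closing remark correctly identifies why the statement degrades from the equality of Lemma~\ref{counting} to an inequality, and your use of the hypothesis $\frac{p+q}{2}-1<r$ to ensure the two exposure regimes cannot overlap is the right way to justify the case split.
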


\begin{lemma}\label{if and only if A}
Let $f\in S_{p+q}$ be any permutation and $C^\pi\subseteq S_{p+q}$ be a group of $(p,q)$-type, $\pi\in S_{p+q}$. Then  $f$ is $(r,C^\pi)$-exposed if and only if $H= \bigcup_{i\in H} A^{C^\pi}_{i \to f(i)}$ or $R= \bigcup_{i\in R} A^{C^\pi}_{i \to f(i)}$.
\end{lemma}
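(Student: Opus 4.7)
The plan is to mirror the proof of Lemma~\ref{iff} almost verbatim, with the key adjustment that the roles of $[p]$ and $[p+1,p+q]$ are now played by the sets $H$ and $R$, respectively. The reason this works is that conjugation preserves orbit structure: $C^\pi$ is still a direct product of two cyclic groups, one acting regularly on $H$ via the cycle $(\pi(1),\ldots,\pi(p))$ and the other acting regularly on $R$ via $(\pi(p+1),\ldots,\pi(p+q))$. In particular, every $g\in C^\pi$ is determined by the pair $(g|_H,g|_R)$, and for any choice of one permutation $g_1\in C^\pi$ and another $g_2\in C^\pi$, the ``glued'' function that agrees with $g_1$ on $H$ and with $g_2$ on $R$ is itself an element of $C^\pi$.

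For the sufficiency direction, I would assume $H = \bigcup_{i\in H}A^{C^\pi}_{i\to f(i)}$ (the case for $R$ is symmetric) and show that every $g\in C^\pi$ must $r$-expose $f$. Given any $g\in C^\pi$, the location $j\triangleq g^{-1}(\pi(1))$ lies in $H$, and by hypothesis $j\in A^{C^\pi}_{i\to f(i)}$ for some $i\in H$. By the definition of $A^{C^\pi}_{i\to f(i)}$, there exists $g'\in C^\pi$ with $(g')^{-1}(\pi(1))=j$ that $r$-exposes $i\to f(i)$. Since the $H$-part of any $g\in C^\pi$ is determined by the location of $\pi(1)$, we have $g|_H = g'|_H$, so $g$ itself $r$-exposes $i\to f(i)$, hence $r$-exposes $f$.

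For the necessity, I would argue the contrapositive. Suppose both $H\neq \bigcup_{i\in H} A^{C^\pi}_{i \to f(i)}$ and $R\neq \bigcup_{i\in R} A^{C^\pi}_{i \to f(i)}$. By the transitive (indeed regular) action of $C^\pi$ on $H$, there exists $g_1\in C^\pi$ with $g_1^{-1}(\pi(1))\in H\setminus\bigcup_{i\in H} A^{C^\pi}_{i\to f(i)}$; by the definition of $A$, $g_1$ fails to $r$-expose $f$ at every position $i\in H$, so $g_1$ $r$-covers $f$ on $H$. Symmetrically, there exists $g_2\in C^\pi$ that $r$-covers $f$ on $R$. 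Define $g(i)=g_1(i)$ for $i\in H$ and $g(i)=g_2(i)$ for $i\in R$; by the direct-product structure $g\in C^\pi$, and $g$ $r$-covers $f$ on all positions, contradicting the assumption that $f$ is $(r,C^\pi)$-exposed.

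The only subtlety — and the ``hardest'' point, though it is really a bookkeeping matter — is justifying that the glued map $g$ genuinely lies in $C^\pi$. This follows because the generators $(\pi(1),\ldots,\pi(p))$ and $(\pi(p+1),\ldots,\pi(p+q))$ have disjoint supports, so $C^\pi \cong \langle(\pi(1),\ldots,\pi(p))\rangle \times \langle(\pi(p+1),\ldots,\pi(p+q))\rangle$, and the $H$-part and $R$-part of an element can be chosen independently. Once this is noted, the remainder of the argument is a routine adaptation of the proof of Lemma~\ref{iff} and needs no new ideas.
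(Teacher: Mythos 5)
Your proposal is correct and follows exactly the route the paper intends: the paper states this lemma without proof as a ``simple observation'' analogous to Lemma~\ref{iff}, and your argument is precisely the proof of Lemma~\ref{iff} transported to $H$ and $R$, with the two points that need checking (the $H$-part of $g\in C^\pi$ is determined by $g^{-1}(\pi(1))$ because the action on $H$ is regular, and the glued map lies in $C^\pi$ because the two generating cycles have disjoint supports) correctly identified and justified.
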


Now we give an upper bound of $L_{max}(C)$.

\begin{lemma}\label{lmaxup}
Let $C=G_{p,q}$ with $p\geq q$. Then
\begin{equation*}
L_{max}(C)\leq p+\left\lfloor {\left(\sqrt{q+\frac{1}{4}}-\frac{1}{2}\right)}^2\right\rfloor.
\end{equation*}
\end{lemma}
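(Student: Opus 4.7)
The plan is to imitate the proof of Lemma~\ref{upper}, substituting the label-agnostic estimates of Lemma~\ref{AA} for those of Lemma~\ref{counting} and using Lemma~\ref{if and only if A} in place of Lemma~\ref{iff}. The wrinkle introduced by relabeling is that each of the position classes $H$ and $R$ may now receive contributions from values lying in either $B$ or $T$, so instead of handling the two sides separately I would pool everything into a single sum.

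First I would fix an arbitrary $\pi\in S_{p+q}$, write $C^\pi=G_{p,q}^\pi$, and take any target permutation $f\in S_{p+q}$. Since $f$ is a bijection and Lemma~\ref{AA} forces $|A^{C^\pi}_{i\to j}|=0$ whenever $j\notin B\cup T$, summing contributions over all positions yields
\[
\sum_{i\in H}|A^{C^\pi}_{i\to f(i)}|+\sum_{i\in R}|A^{C^\pi}_{i\to f(i)}|\;\leq\;\sum_{j\in B}(p+q-r-j)+\sum_{j\in T}(j-r-1)\;=\;(p+q-r)(p+q-r-1).
\]

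Next I would argue that the inequality $(p+q-r)(p+q-r-1)<q$ alone is enough to force $f$ to be $(r,C^\pi)$-covered. If the total above is strictly less than $q$, each of the two partial sums is also strictly less than $q$; combined with $p\geq q$, this gives $\sum_{i\in H}|A^{C^\pi}_{i\to f(i)}|<|H|=p$ and $\sum_{i\in R}|A^{C^\pi}_{i\to f(i)}|<|R|=q$, so both unions $\bigcup_{i\in H}A^{C^\pi}_{i\to f(i)}$ and $\bigcup_{i\in R}A^{C^\pi}_{i\to f(i)}$ are proper subsets of $H$ and $R$ respectively. Lemma~\ref{if and only if A} then delivers $d(f,C^\pi)\leq r$.

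Finally I would identify the smallest $r$ at which this inequality first holds. Setting $k=p+q-r$, the condition becomes $k(k-1)<q$, equivalently $k<\frac{1+\sqrt{4q+1}}{2}$, and a routine check shows that $r=p+\lfloor(\sqrt{q+\frac{1}{4}}-\frac{1}{2})^2\rfloor$ is precisely the smallest integer for which this is satisfied. Since the whole argument is uniform in $\pi$, this yields $r(C^\pi)\leq r$ for every $\pi\in S_{p+q}$, hence the claimed bound on $L_{max}(C)$. The only mildly delicate step is this final floor conversion; the real combinatorial content of the proof is carried by the single displayed estimate above.
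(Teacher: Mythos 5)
Your proposal is correct and follows essentially the same route as the paper: both bound $\sum_{i\in[p+q]}|A^{C^\pi}_{i\to f(i)}|$ via Lemma~\ref{AA} by $(p+q-r)(p+q-r-1)$, invoke Lemma~\ref{if and only if A} to conclude coverage once this quantity drops below $q\le p$, and then solve the same quadratic threshold. The only cosmetic difference is that you pool the two sums before comparing to $q$ while the paper bounds each union separately by the same quantity $d(r)$; just note, as the paper does, that the relevant $r\ge p>\frac{p+q}{2}-1$ so that Lemma~\ref{AA} applies.
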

\begin{proof}
 Let $f\in S_{p+q}$ be any permutation. Suppose that the integer $r>\frac{p+q}{2}-1$ and closes to $L_{max}(C)$.  Using Lemma~\ref{AA}  we can get
 \begin{align*} \left |\bigcup_{i\in H}A^{C^\pi}_{i\to f(i)}\right |\leq&\sum _{i\in H} \left |{A^{C^\pi}_{i\mapsto f(i)}}\right |   \notag \\ \leq & \sum _{i\in [p+q]} \left |{A^{C^\pi}_{i\mapsto f(i)}}\right |=  \sum _{f(i)\in B\cup T} \left |{A^{C^\pi}_{i\mapsto f(i)}}\right |\\\leq &2\times\frac{(1+p+q-r-1)(p+q-r-1)}{2}\triangleq d(r). \end{align*}

For the same reason, it's not hard to get

\begin{equation*}
\left|\bigcup_{i\in R}A^{C^\pi}_{i\to f(i)}\right|\le d(r).
\end{equation*}

 By Lemma~\ref{if and only if A}, if  $d(r)<q$, then $f$ is $(r,C^\pi)$-covered, i.e., $L_{max}(C)$ is upper bounded by the smallest integer $r$ satisfying $d(r)<q$. We only need to solve the quadratic inequality $d(r)<q$ for $r$, the result of which is $r>p+q-\frac{1}{2}-\sqrt{q+\frac{1}{4}}$.
 Notice that $p+\left\lfloor {\left(\sqrt{q+\frac{1}{4}}-\frac{1}{2}\right)}^2\right\rfloor$ is the least integer which is larger than $p+q-\frac{1}{2}-\sqrt{q+\frac{1}{4}}$, so $L_{max}(C)\leq p+\left\lfloor {\left(\sqrt{q+\frac{1}{4}}-\frac{1}{2}\right)}^2\right\rfloor$.
 \end{proof}

%  Now let's discuss the condition of equality. With a little deduction we get that the condition requires   all elements in $B\cup T$ should be in $R$. However, when $q$ is small enough, this requirement won't be satisfied because $|B\cup T|>q$. To avoid this, we want $|T|+|B|\le q$. Notice the $L_{max}(C)$ is larger than $r(G_{p,q})$, so $|T|=|B|\le  \sqrt{2q+\frac{1}{4}}-\frac{1}{2}$.  Relax the constrainment to $|T|=|B|= \sqrt{2q+\frac{1}{4}}-\frac{1}{2}$ and we get $q\ge 6$ is enough.

The next lemma shows that when $q\ge 3$ the upper bound in Lemma~\ref{lmaxup} is tight.

\begin{theorem}\label{lmax}
Let $C=G_{p,q}$ with $p\geq q\geq 3$. Then \begin{equation*}
L_{max}(C)= p+q-\left\lceil \frac{\sqrt{4q+1}-1}{2}\right\rceil.\end{equation*}
\end{theorem}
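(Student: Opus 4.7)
The upper bound $L_{\max}(C)\le p+q-\lceil(\sqrt{4q+1}-1)/2\rceil$ is immediate from Lemma~\ref{lmaxup}: expanding $(\sqrt{q+\tfrac14}-\tfrac12)^2=q+\tfrac12-\sqrt{4q+1}/2$ gives $\lfloor(\sqrt{q+\tfrac14}-\tfrac12)^2\rfloor=q-\lceil(\sqrt{4q+1}-1)/2\rceil$. The substantive work is the matching lower bound.

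Let $k$ be the smallest positive integer with $k(k+1)\ge q$, so $k=\lceil(\sqrt{4q+1}-1)/2\rceil$, and set $r_0=p+q-k-1$. I will construct $\pi^*\in S_{p+q}$ and $f^*\in S_{p+q}$ with $d(f^*,g)>r_0$ for every $g\in G_{p,q}^{\pi^*}$, giving $r(G_{p,q}^{\pi^*})\ge p+q-k$. The design is motivated by contrasting Lemmas~\ref{counting} and~\ref{AA}: under the natural labeling only $B=[k]$, and not $T=[p+q-k+1,p+q]$, can contribute to $A^C_{i\to\cdot}$ at $R$-positions, so realizing the doubling factor in Lemma~\ref{AA} requires that $R$ meet both $B$ and $T$. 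Accordingly, I choose $\pi^*$ so that $R=\pi^*([p+1,p+q])$ contains as many of $B\cup T$ as possible, arranged in the $q$-cycle by interleaving small and large extremes (e.g.\ starting $(1,p+q,2,p+q-1,\ldots)$), padded with middle values from $[k+1,p+q-k]$ when $2k<q$; the complementary set $H=\pi^*([p])$ is listed in natural increasing order.

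For $f^*$, I adapt the $\lambda$-sequence idea from Theorem~\ref{rGpq}: I introduce an auxiliary sequence $\mu(i)$ indexing selected $R$-positions $\pi^*(p+\mu(i))$, and at each such position I plant a value chosen so that the $i$th cyclic shift of the $R$-cycle $r_0$-exposes $f^*$ there. Unlike Theorem~\ref{rGpq}, each shift can now contribute exposures from both ends of $[p+q]$---a large $f^*$-value meeting an element of $B$, or a small value meeting an element of $T$---so the closing combinatorial inequality becomes $k(k+1)\ge q$ rather than $k(k-1)/2\ge q$. Lemma~\ref{if and only if A}, applied on the $R$-side, then reduces the theorem to proving
\[R=\bigcup_{i\in R} A^{G_{p,q}^{\pi^*}}_{i\to f^*(i)},\]
which I will do by checking that as $t$ ranges over $[q]$ the locations $g_t^{-1}(\pi^*(p+1))$ of the exposing shifts are distinct and exhaust $R$.

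The main technical obstacle will be bookkeeping in the edge cases where $2k>q$ (so $B\cup T$ cannot both fit inside $R$) or where $q$ lies close to $k(k-1)+1$, in which the interleaving of $B$ and $T$ inside the $R$-cycle is uneven. As in Theorem~\ref{rGpq}, these should be controlled by a threshold index analogous to $I$ there, and I will separately check that my partial specification of $f^*$ extends to a valid permutation of $[p+q]$.
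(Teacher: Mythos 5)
Your plan follows essentially the same route as the paper's proof: the same $r_0=p+q-k-1$ with $k$ the least integer satisfying $k(k+1)\ge q$, the same key idea of relabeling so that $R$ contains both the bottom set $B$ and the top set $T$ (which is exactly what makes the doubled count in Lemma~\ref{lmaxup} attainable), the same quadratic-gap location sequence planting small values to meet $T$ and large values to meet $B$ so that the exposing shifts tile $R$, and the same reduction via Lemma~\ref{if and only if A}. Be aware that what you have is a sound blueprint rather than a finished argument: the paper's proof spends nearly all of its length on the explicit $\pi$ and $f_0$ (treated separately for $q\le 5$, for $q=k(k+1)$, and for $q<k(k+1)$), on verifying that the partially specified $f_0$ extends to a genuine permutation, and on the interval computations showing $\bigcup_{i\in R}A^{C^\pi}_{i\to f_0(i)}=R$ --- but every ingredient you name, including the closing inequality $k(k+1)\ge q$, is the one the paper uses.
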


\begin{proof}
 Note that $p+q-\left\lceil \frac{\sqrt{4q+1}-1}{2}\right\rceil$ shares the same value with $p+\left\lfloor {\left(\sqrt{q+\frac{1}{4}}-\frac{1}{2}\right)}^2\right\rfloor$, which is the upper bound of $L_{max}(C)$ by Lemma~\ref{lmaxup}. So we only need to show that $L_{max}(C)\geq  p+q-\left\lceil \frac{\sqrt{4q+1}-1}{2}\right\rceil$. Let $r_0=p+q-\left\lceil \frac{\sqrt{4q+1}-1}{2}\right\rceil-1$, it suffices to find a permutation $\pi\in S_{p+q}$, and a permutation $f_0\in S_{p+q}$, such that $f_0$ is $r_0$-exposed by $C^\pi$.

 For $q\leq 5$, let $\pi\in S_{p+q}$ be a permutation satisfying $\pi(p+i)=i$, $i\in [2]$ and $\pi(p+i)=p+i$, $i\in [3,q]$. The corresponding permutation $f_0$  that  is $r_0$-exposed by $C^\pi$ has the following constraints: when $q=3$, $f_0(1)=p+3$ and $f_0(2)=p+2$; when $q=4$, $f_0(1)=1, f_0(2)=p+4$ and $f_0(p+3)=2$; when $q=5$, $f_0(1)=1, f_0(2)=p+5$, $f_0(p+3)=2$ and $f_0(p+5)=p+4$.

When $q\geq 6$, we prove it in two cases. Denote $k=\left\lceil \frac{\sqrt{4q+1}-1}{2}\right\rceil$, which is the smallest integer satisfying $k^2+k-q\ge0$. Hence $(k-1)^2+(k-1)=k^2-k<q$.

\textbf{Case 1: }when $q=k(k+1)$.
 We find a permutation $\pi$ with a form like this:
\begin{equation*}
\pi(i)=\begin{dcases}
       2, & \textrm{when $i=p+1$,}\\
             1, & \textrm{when $i=p+2$,}\\
             i-p,&\textrm{when $ i\in[p+3,p+k]$,}\\
             i, &\textrm{when $i\in[p+k+1,p+q]$,}\\
             \textrm{arbitrary}, & \textrm{otherwise}.
             \end{dcases}
\end{equation*}
In this case, $C^\pi=\left<(\pi(1),\pi(2),\ldots,\pi(p)),(2, 1,3,4,\ldots, k, p+k+1,\ldots,p+q)\right>$. Let $R=\{1,2,3,\ldots,k,p+k+1,\ldots,p+q\}$.  Note that for any element $s_1, s_2\in R$, $s_1\not\equiv s_2\pmod p$.

Now we define our permutation $f_0$ as follows:
\begin{equation*}
f_0(i)=
\begin{dcases}
             1, & \textrm{when $i=1$,}\\
             p+q, & \textrm{when $i=2$,}\\
             p+q+1-k,&\textrm{when $i\equiv 3\pmod p$ and $i\in R$,}\\
             k-l,&\textrm{when } i= {l+1 \choose 2}+p+2+k,
              l\in[0,k-2],i\in R ,\\
             p+q-k+1+l,   &\textrm{when } i= p+q-k+2-{l+1 \choose 2},
               l\in[k-2],i\in R,\\
              \textrm{arbitrary}, & \textrm{otherwise}.
 \end{dcases}
\end{equation*}

 When $q=6$, we have $k=2$ and $(\pi(p+1),\ldots,\pi(p+6))=(2,1,p+3,p+4,p+5,p+6)$. In this case, $f_0$ is a permutation such that  $f_0(1)=1$, $f_0(2)=p+6$, $f_0(p+3)=p+5$ and $f_0(p+4)=2$.  To prove  that $f_0$ is $(p+3,C^\pi)$-exposed, we need to compute $A^{C^{\pi}}_{i\to f_0(i)}$ by Lemma~\ref{if and only if A}.  For some $i\in R$, the sets are listed below.
\begin{itemize}
\item $A^{C^{\pi}}_{2\to f_0(2)}=A^{C^\pi}_{2\to p+6}=\{2,p+6\}=\{\pi(p+1),\pi(p+6)\}$;
 \item $A^{C^{\pi}}_{1\to f_0(1)}=A^{C^\pi}_{1\to 1}=\{p+3,p+4\}=\{\pi(p+3),\pi(p+4)\}$;
  \item $A^{C^{\pi}}_{p+3\to f_0(p+3)}=A^{C^\pi}_{p+3\to p+5}=\{1\}=\{\pi(p+ 2)\}$;
  \item $A^{C^{\pi}}_{p+4\to f_0(p+4)}=A^{C^\pi}_{p+4\to 2}=\{p+5\}=\{\pi(p+5)\}$.
\end{itemize}
Combining the above sets, we see that $R=\bigcup_{i\in R}A^{C^\pi}_{i\to f(i)}$, then apply  Lemma~\ref{if and only if A}.

When $q>6$, we have $k\geq 3$, and hence $2k\leq k^2-k<q$. First, we check that $f_0$ is a well defined permutation.
In the definition of $f_0$, we specify the positions of $2k$ distinct elements, i.e. $[k]\cup[p+q-k+1,p+q]$, and the remaining elements are distributed randomly. So we only need to show that the $2k$ positions defined are different. When $l$ varies from $0$ to $k-2$, ${l+1 \choose 2}+p+2+k$ increases strictly with $l$, begins with $p+k+2>p+k$ when $l=0$ and ends at $p+\frac{(k-1)(k-2)}{2}+k+2$. In the range $l\in[k-2]$, the value of $p+q-k+2-{l+1 \choose 2}$ decreases with $l$ strictly, begins with $p+q-k+1<p+q$ and ends at $p+q-k+2-\frac{(k-1)(k-2)}{2}$. Since $q=k^2+k$, we can get $p+q-k+2-\frac{(k-1)(k-2)}{2}$ is strictly larger than $p+\frac{(k-1)(k-2)}{2}+k+2$, which means those $2k$ positions are all distinct, and $f_0$ is well-defined. %Furthermore, those $\pmod p$ signals in the definition are redundant for the values $i$ all lie in $[p+k+1,p+q]$.

Next, if we can show that $R=\bigcup_{i\in R}A^{C^\pi}_{i\to f(i)}$, then the proof follows by  Lemma~\ref{if and only if A}. So we only need to focus on the locations in $R$, where the elements are also from $R$, see the array $\R$ in Fig~\ref{tab1}. For convenience, let $g_i$ be anyone of the $p$ permutations in $C^{\pi}$ that are consistent with the $i$th row of $\R$, that is,
$g_i(\pi(p+j))=\pi(p+(i+j-1)\mm q)$, $  j\in[q]$.

 When considering the locations $1$, $2$ and $3$, we have the following sets.
\begin{itemize}
\item $A^{C^{\pi}}_{2\to f_0(2)}=A^{C^\pi}_{2\to p+q}=\{2,p+q,p+q-1,\ldots,p+q+2-k\}=
    \{\pi(p+1),\pi(p+q),\ldots,\pi(p+q+2-k)\}$;
 \item $A^{C^{\pi}}_{1\to f_0(1)}=A^{C^\pi}_{1\to 1}=\{3,4,\ldots,k,p+k+1,p+k+2\}=\{\pi(p+3),\pi(p+4)\,\ldots,\pi(p+k+2)\}$;
  \item $\pi(p+2)=1 \in A^{C^{\pi}}_{3\to f_0(3)}$.

\end{itemize}

Besides the three sets above, it remains to show that  $[p+k+3,p+q-k+1]\subset \bigcup_{i\in R}A^{C^\pi}_{i\to f_0(i)}$.

Now we check the locations $ i= {l+1 \choose 2}+p+2+k$, for some $ l\in[0,k-2]$. Here $f_0(i)=k-l$ by definition. When $l=0$, $i=p+k+2$ and $f_0(i)=k$. Since $g_{q-k-1}(i)=g_{q-k-1}(p+k+2)=g_{q-k-1}(\pi(p+k+2))=\pi(p+q)=p+q$, then $f_0$ is $r_0$-exposed by $g_{q-k-1}$ in position $i$, and hence $p+k+3\in A^{C_\pi}_{i\to f_0(i)}$ for $i=p+k+2$. For general $l$, define $\theta(l)=q-k-1-{l+1 \choose 2}\geq 1$. Then $f_0$ is  $r_0$-exposed by $g_{j}$ in position $ i= {l+1 \choose 2}+p+2+k$ for any  $j\in [\theta(l+1)+1, \theta(l)]$, from which we get  $[p+q+2-\theta(l),p+q+1-\theta(l+1)]\subset A^{C^\pi}_{f_0^{-1}(k-l)\to k-l}$. Combining those intervals for $l\in[0,k-2]$ together, we get

\begin{equation*}\left[p+q+2-\theta(0),p+q+1-\theta(k-1)\right]=
\left[p+k+3,p+\frac{k(k-1)}{2}+k+2\right]\subset
\bigcup_{i\in R}A^{C^\pi}_{i\to f_0(i)}.
\end{equation*}

Similarly, we check the locations $i= p+q-k+2-{l+1 \choose 2}$, for $l\in[k-2]$.
               Define $\tau(l)={l+1 \choose 2}+k$.  By computations, we see that $f_0$ is  $r_0$-exposed by $g_{j}$ in position $ i= p+q-k+2-{l+1 \choose 2}$ for any  $j\in [\tau(l), \tau(l+1)-1]$, from which we get  $[p+q+3-\tau(l+1),p+q+2-\tau(l)]\subset A^{C^\pi}_{i\to f_0(i)}$. Combining those pieces together for $l\in[k-2]$, we get
\begin{equation*}
\left[p+q+3-\tau(k-1),p+q+2-\tau(1)\right]=
\left[p+q+3-\frac{k(k-1)}{2}-k,p+q-k+1\right]\subset
\bigcup_{i\in R}A^{C^\pi}_{i\to f_0(i)}.
\end{equation*}

Finally, we  need to show that  $p+\frac{k(k-1)}{2}+k+2\geq \left(p+q+3-\frac{k(k-1)}{2}-k\right)-1$, which is true since $k^2+k=q$.

\textbf{Case 2: }when $q\neq k(k+1)$. In this case, we have $k=\left\lceil\frac{\sqrt{4q+1}-1}{2}\right\rceil
=\left\lfloor\frac{\sqrt{4q+1}+1}{2}\right\rfloor$ and $k(k+1)>q$.
So we only need to prove $L_{max}(C)\geq \left\lfloor\frac{\sqrt{4q+1}+1}{2}\right\rfloor$. A simpler pair of permutations $\pi$ and $f_0$ suffices to give the proof. We list them below.
\begin{equation*}
\pi(i)=\begin{dcases}
i-p,&\textrm{when $i\in[p+1,p+k]$,}\\
             i, &\textrm{when $i\in[p+k+1,p+q]$,}\\
             \textrm{arbitrary}, & \textrm{otherwise}.
             \end{dcases}
\end{equation*}
The definition of $f_0$ needs a parameter $I$, which is defined as the smallest integer such that $p+k^2+k-1-{I+1 \choose 2}< p+q$. Since $p+k^2+k-1\geq p+q$ and $p+k^2+k-1-{k+1 \choose 2}=p+{k+1 \choose 2}-1<p+k^2-k<p+q$, we have $1\leq I\leq k$.
\begin{equation*}
f_0(i)=
\begin{dcases}
p+q-k+l,&\textrm{when } i= \pi\left(p+{l+1 \choose 2}\right),
              l\in[k],i\in R ,\\
             k-l+1,   &\textrm{when } i= \pi\left(p+k^2+k-1-{l+1 \choose 2}\right),
               l\in[I,k],i\in R,\\
              \textrm{arbitrary}, & \textrm{otherwise}.
 \end{dcases}
\end{equation*}

First, we claim that $f_0$ is well-defined. When $l$ varies from $1$ to $k$, $p+{l+1 \choose 2}$ increases strictly from $p+1$ to $p+\frac{k(k+1)}{2}\leq p+q$. When $l\in[I,k]$, $p+k^2+k-1-{l+1 \choose 2}$ decreases strictly from some value smaller than $p+q$ to $p+{k+1 \choose 2}-1$. Since $k\geq 3$, $p+{k\choose 2}<p+{k+1\choose2}-1$, we have $f_0$ is well-defined.

Next, we prove that $R=\bigcup_{i\in R}A^{C^\pi}_{i\to f(i)}$, where $R=[k]\cup[p+k+1,p+q]$. Since it is similar to Case 1, we give a sketch of the proof. For $i=\pi\left(p+{l+1 \choose 2}\right)$ for some $l\in[k]$, $A_{i\to p+q-k+l}^{C^\pi}=\{\pi(p+j)|j\in\left[{l \choose 2}+1,{l+1 \choose 2}\right]\}$. We combine those intervals together to get
\begin{equation*}
\left\{\pi(p+j)|j\in\left[1,{k+1 \choose 2}\right]\right\}\subset
\bigcup_{i\in R}A^{C^\pi}_{i\to f_0(i)}.
\end{equation*}
When $i=\pi\left(p+k^2+k-1-{l+1 \choose 2}\right)$ for some $l\in[I+1,k]$, we have \[A_{i\to k-l+1}^{C^\pi}=\left[p+k^2+k-{l+1 \choose 2}, p+k^2+k-1-{l \choose 2}\right].\] Specially, if $l=I$, we have $[p+k^2+k-{I+1 \choose 2}, p+q]\subset A_{i\to k-I+1}^{C^\pi}$. Combining those pieces together, we finish the proof.
\end{proof}

Next we deal with the case when $q\leq 2$ separately.

\begin{lemma}
Let $C=G_{p,q}$ with $p\geq q$. Then $L_{max}(C)=p$ when $q=1$ or $2$.
\end{lemma}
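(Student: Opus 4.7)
The plan is to prove both bounds separately. The lower bound is immediate: $L_{max}(C) \geq r(C^{\mathrm{id}}) = r(G_{p,q})$, and by Theorem~\ref{rGpq} the floor term in the covering-radius formula vanishes when $q \in \{1,2\}$, giving $r(G_{p,q}) = p$ and hence $L_{max}(C) \geq p$. For the matching upper bound when $q=1$, any two permutations in $S_{p+1}$ have coordinate differences bounded by the diameter of the value set $[p+1]$, which is $p$, so $d(f,g)\le p$ holds trivially and $r(C^\pi) \leq p$ for every relabeling $\pi$.

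For the upper bound when $q=2$, I will fix $\pi \in S_{p+2}$ and an arbitrary $f\in S_{p+2}$ and directly exhibit $g \in C^\pi$ with $d(f,g) \leq p$. The key observation is that the only pairs of elements in $[p+2]$ with absolute difference exceeding $p$ are $\{1,p+2\}$, so the condition $d(f,g)\le p$ reduces to the two constraints $g(u) \neq p+2$ and $g(v) \neq 1$ at $u = f^{-1}(1)$ and $v = f^{-1}(p+2)$. Decompose $C^\pi$ as the internal direct product of a cyclic factor of order $p$ acting on $H = \pi([p])$ and an order-two factor acting on $R = \pi(\{p+1,p+2\})$. A constraint $g(w) \neq z$ kills at most one cyclic shift when $w,z\in H$, at most one order-two element when $w,z \in R$, and is vacuous when $w$ and $z$ lie in different parts. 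Since the two factors act independently, a brief case analysis on the locations of $u,v,1,p+2$ among $H$ and $R$ shows that, for $p\ge 3$, at least $p-2\ge 1$ cyclic shifts and at least one order-two element survive both constraints; combining one of each yields the required $g$.

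The only configuration that escapes the generic count is $p=q=2$: here the cyclic factor has only two elements, so two restrictions on the cyclic side could a priori leave none. In that sub-case one is forced into $H=\{1,4\}$ and $\{u,v\}=\{1,4\}$; a short direct check shows that the constraints $g(u)\ne 4$ and $g(v)\ne 1$ always rule out the \emph{same} cyclic shift (the transposition if $u=1, v=4$, the identity if $u=4, v=1$), leaving the other shift available. A completely symmetric check handles both restrictions landing on the order-two factor (the case $R=\{1,4\}$). Across all cases a valid $g$ exists, so $r(C^\pi)\le p$ for every $\pi$, and $L_{max}(C)\le p$.

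The main obstacle is the borderline case $p=q=2$: the naive ``at most two kills out of $p$'' bookkeeping fails when $p=2$, and one must exploit the symmetry between the values $1$ and $p+2$ (and between the two positional constraints) to establish that the two forbidden group elements necessarily coincide. Every other configuration reduces to routine counting against the direct-product structure of $C^\pi$.
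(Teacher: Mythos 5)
Your argument is correct and is essentially the paper's: the paper's $q=2$ proof likewise reduces everything to the positions of $1$ and $p+2$ via the sets $A^{C^\pi}_{i\to j}$, which record exactly which representatives of each direct factor are eliminated by each constraint, and it disposes of the degenerate configuration $R=\{1,p+2\}$ (and its $H$-side analogue when $p=2$) by the same direct verification that both constraints eliminate the \emph{same} factor element. The only cosmetic differences are that you rederive this bookkeeping from scratch instead of citing Lemmas~\ref{AA} and~\ref{if and only if A}, and that you use the trivial diameter bound for $q=1$ where the paper invokes Lemma~\ref{lmaxup}.
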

\begin{proof} When $q=1$, $L_{max}(C)=p$ by the fact that $r\left(G_{p,q}\right)=p$ and the upper bound in Lemma~\ref{lmaxup}.

 When $q=2$, we know that $p=r\left(G_{p,q}\right)\leq L_{max}(C) \leq p+1$ by Lemma~\ref{lmaxup}. To show that $L_{max}(C) \leq p$, we need to prove that for any $\pi\in S_{p+2}$ and $f_0\in S_{p+2}$, $f_0$ is $(p,C^\pi)$-covered. Since $r=p$ here, we have $B=\{1\}$ and $T=\{p+2\}$. By Lemma~\ref{AA}, $|A_{i\to 1}^{C^\pi}|\leq 1$ and $|A_{i\to p+2}^{C^\pi}|\leq  1$. So the fact $|H|\geq|R|=2$ implies that $f_0$ is $(p,C^\pi)$-covered if and only if both $|\bigcup_{i\in R} A^{C^\pi}_{i \to f_0(i)}|\leq 1$ and $|\bigcup_{i\in H} A^{C^\pi}_{i \to f_0(i)}|< |H|$ hold. Now we claim that $|\bigcup_{i\in R} A^{C^\pi}_{i \to f_0(i)}|=2$ never happens. Otherwise, $f_0(i)$ must be in $B\cup T=\{1,p+2\}$ for all $i\in R$, and $R$ must be $\{1,p+2\}$, i.e., $\pi(p+1)=1$ and $\pi(p+2)=p+2$. But simple verification shows $|\bigcup_{i\in R} A^{C^\pi}_{i \to f(i)}|=1$ for all possible $f_0$ and $\pi$ with these constraints. The other fact that $|\bigcup_{i\in H} A^{C^\pi}_{i \to f_0(i)}|< |H|$  follows from  similar arguments.
\end{proof}

We don't try to determine $L_{min}(C)$, since we are not able to find a method to give an upper bound estimation. For the lower bound, the sphere packing bound could be a lower bound of $L_{min}(C)$. However, this depends on the volume of a ball with radius $r$, which is not easy to calculate, see \cite{klove2009generating,klove2011lower,KLOVE2008SPHERES,schwartz2017improved,karni2018infinity}. Even if we know the exact volume, the lower bound depends heavily on the relation of $p$ and $q$. When $p$ and $q$ are quite close, the ball-volume method  never works well and it always ends up with a constant as a lower bound when $p+q$ is large enough. Here, we establish a relation between $L_{min}(G_{p,q})$ and $L_{min}(G_p)$, which yields a lower bound from the estimation of $L_{min}(G_p)$.  The gap between the lower bound below and the value of $L_{max}(G_{p,q})$ remains large. New methods are needed to estimate the tightness of the lower bound and to give a nice upper bound estimation.

\begin{lemma}For any $p\geq q$, $L_{min}(G_{p,q})\geq L_{min}(G_p)$. Hence $L_{min}(G_{p,q})\geq p-\left\lceil\sqrt{2p\ln(p)+2p}\right\rceil$.
\end{lemma}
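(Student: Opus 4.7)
The plan is to prove the structural inequality $L_{min}(G_{p,q}) \geq L_{min}(G_p)$ by showing that for every $\pi \in S_{p+q}$, the covering radius $r((G_{p,q})^\pi)$ is at least $r((G_p)^\sigma)$ for some $\sigma \in S_p$ depending on $\pi$. The second inequality is then a direct substitution of the bound on $L_{min}(G_p)$ quoted in the Introduction.

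First, I would exploit the direct-product structure. Since $G_{p,q}=\langle(1,\ldots,p)\rangle\times\langle(p+1,\ldots,p+q)\rangle$, its conjugate $(G_{p,q})^\pi$ is generated by the two disjoint cycles $(\pi(1),\ldots,\pi(p))$ and $(\pi(p+1),\ldots,\pi(p+q))$, acting independently on $H=\pi([p])$ and $R=\pi([p+1,p+q])$. Let $C_H\subseteq\mathrm{Sym}(H)$ be the cyclic group of order $p$ generated on $H$. Let $\phi\colon [p]\to H$ be the unique order-preserving bijection. Then $\phi^{-1}C_H\phi$ is a cyclic subgroup of $S_p$ generated by the $p$-cycle $(\phi^{-1}(\pi(1)),\ldots,\phi^{-1}(\pi(p)))$, i.e.\ a relabeling $(G_p)^\sigma$ for a well-defined $\sigma\in S_p$. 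The key geometric observation, which I would record explicitly, is that $\phi$ is distance-expanding: for all $a,b\in[p]$, $|\phi(a)-\phi(b)|\geq|a-b|$, since listing $p$ elements of $[p+q]$ in increasing order yields gaps of size at least one.

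Second, I would lift an extremal permutation. Choose $\hat f\in S_p$ with $d(\hat f,(G_p)^\sigma)=r((G_p)^\sigma)$, and define $\tilde f\in S_{p+q}$ by $\tilde f(i)=\phi(\hat f(\phi^{-1}(i)))$ for $i\in H$ and $\tilde f(i)=i$ for $i\in R$. Since $\tilde f$ is a bijection on each of $H$ and $R$ separately, it is a permutation of $[p+q]$. For any $g\in(G_{p,q})^\pi$, the restriction $g|_H$ corresponds under $\phi$ to some $\hat g_H\in(G_p)^\sigma$, and
\begin{equation*}
d(\tilde f,g)\;\geq\;\max_{i\in H}|\tilde f(i)-g(i)|\;=\;\max_{j\in[p]}\bigl|\phi(\hat f(j))-\phi(\hat g_H(j))\bigr|\;\geq\;\max_{j\in[p]}|\hat f(j)-\hat g_H(j)|\;=\;d(\hat f,\hat g_H),
\end{equation*}
using the distance-expanding property in the last inequality. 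Minimizing over $g\in(G_{p,q})^\pi$ (equivalently over $\hat g_H\in(G_p)^\sigma$) gives $d(\tilde f,(G_{p,q})^\pi)\geq r((G_p)^\sigma)\geq L_{min}(G_p)$, hence $r((G_{p,q})^\pi)\geq L_{min}(G_p)$. Taking the minimum over $\pi\in S_{p+q}$ yields $L_{min}(G_{p,q})\geq L_{min}(G_p)$, and the second inequality follows from the bound $L_{min}(G_p)\geq p-\lceil\sqrt{2p\ln p+2p}\rceil$ recalled in Section~\ref{s:pre}.

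There is no real obstacle here; the only subtlety is identifying the right relabeling $\sigma$ of $G_p$ associated to $\pi$, which is forced once one insists that $\phi$ be order-preserving so that the distance-expanding inequality $|\phi(a)-\phi(b)|\geq|a-b|$ holds. Choosing the $R$-part of $\tilde f$ to be the identity is immaterial to the argument, as the bound only uses coordinates in $H$.
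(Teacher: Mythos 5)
Your proposal is correct and follows essentially the same route as the paper's proof: project $\pi$ and the group onto the $H$-coordinates, pull back an extremal permutation for the corresponding relabeling of $G_p$ through the order-preserving bijection between $[p]$ and $H$, and use the distance-expanding property $|\phi(a)-\phi(b)|\geq|a-b|$ to transfer the exposure bound. The only cosmetic difference is that the paper phrases the bijection as $\phi_H\colon H\to[p]$ and works with "permutation vectors" over $H$, while you work with its inverse; the argument is the same.
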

\begin{proof} Suppose that $ L_{min}(G_p)\triangleq r$. To show that $L_{min}(G_{p,q})\geq r$, it is equivalent to show that for any $\pi\in S_{p+q}$, there exists a permutation $f_0\in S_{p+q}$, such that $f_0$ is $(r-1)$-exposed by any permutation in $G_{p,q}^{\pi}$. Now we show how to find the required $f_0$ for any $\pi$.

We first introduce some useful notations. Given any set $D$ of $p$ distinct elements in $[p+q]$, there is a natural bijection from $D$ to $[p]$ by ranking the elements of $D$ in a natural order.  Denote this bijection by $\phi_D$, and denote $\phi_{D}(h):=[\phi_{D}(h_1),\phi_{D}(h_2),\ldots,\phi_{D}(h_p)]\in S_p$  for any {\it permutation vector} $h=[h_1,h_2,\ldots,h_p]$ over $D$. Here, a {\it permutation vector} over $D$ is a vector of $D$ with length $p$ such that each element occurs exactly once in the vector.   Then $\phi_{D}$ induces a bijection from the set of all permutation vectors over $D$ to $S_p$. It is clear that $|\phi^{-1}_D(i)-\phi^{-1}_D(j)|\geq |i-j|$ for any set $D$ of distinct positive elements.

 For any $\pi\in S_{p+q}$, let $\pi|_{[p]}$ be the vector restricted on the positions $[p]$. Then $\pi|_{[p]}\triangleq [\pi(1),\pi(2),\ldots,\pi(p)]$ is a permutation vector over $H=\{\pi(1),\pi(2),\ldots,\pi(p)\}$, and hence $\bar{\pi}=\phi_H(\pi|_{[p]})$ is a permutation in $S_p$.
Since $ L_{min}(G_p)\triangleq r$, there exists a permutation $\bar{f_0}\in S_p$ such that $\bar{f_0}$ is $(r-1)$-exposed by any permutation in $G_{p}^{\bar{\pi}}$.  Now we define $f_0$ as follows. For each $i\in[p]$, let $f_0(\pi(i))=\phi^{-1}_H\left(\bar{f_0}(\bar{\pi}(i))\right)$, and complete the remaining positions to obtain a permutation $f_0\in S_{p+q}$.

We claim that $f_0$ is the desired permutation. In fact, for each permutation $g\in G^{\pi}_{p,q}$, focusing on the locations in $H$, we define $g|_H$ as a permutation vector over $H$, in which the $\bar{\pi}(i)$th entry is $g(\pi(i))$. By the definition of $G^{\pi}_{p,q}$, we know that $\bar{g}=\phi_H(g|_H)$ is a permutation in $G_p^{\bar{\pi}}$ and $\bar{g}(\bar{\pi}(i))=\phi_H(g(\pi(i)))$ for $i\in [p]$. Then there exists a position $l\in [p]$ such that $|\bar{g}(\bar{\pi}(l))-\bar{f_0}(\bar{\pi}(l))|\geq r$.  So \[|g(\pi(l))-f_0(\pi(l))|=\left|\phi^{-1}_H\left(\bar{g}\left(\bar{\pi}(l)\right)\right)-\phi^{-1}_H(\bar{f_0}(\bar{\pi}(l)))\right|\geq \left|\bar{g}(\bar{\pi}(l))-\bar{f_0}(\bar{\pi}(l))\right|\geq r,\]which completes the proof.

% $G_p^\pi:\pi\in S_p$
% From the definition of $L_{min}$, there always exists one $f=[f(1), f(2),\ldots, f(p)]$ which is $r-1$-exposed by any given $G_p^\pi:\pi\in S_p$. We arrange those $p$ elements $h(i)$, $i\in [p]$ in $H$ with increasing order, and we define one new permutation $F\in S_{p+q}$, with $F(h(i))=h(f(i))$, $i\in[p]$. The restriction of $C^\pi$ on $H$ has only $p$ elements $H_1$ to $H_p$, such that $H_i(\pi(j))=\pi((i+j-1)\mod^+ p)$. We define a sequence $t_i:i\in[p]$, such that $\pi(i)=h(t_i)$.
%
% Consider the relabeling of $\acute\pi\triangleq (t_1,t_2,\ldots, t_p)$ on $G_p$. Any member in $G_p^{\acute\pi}$ has the form $h_i: h_i(t_j)=t_{(i+j-1)\mod^+ p}$, $i\in[p]$. Here we notice that for any $i,j\in[p]$, we have $H_i(h(j))=h(h_i(j))$, because when you assume $j=t_l$ for some $l\in[p]$, we have $H_i(h(t_l))=H_i(\pi(l))=\pi((l+i-1)\mod^+ p) =h(t_{((i+l-1)\mod^+ p)})=h(h_i(t_l))$.
%
% With the definition of $L_{min}$, for any $i\in[p]$ there exists some $l(i)\in [p]$ such that $f$ is $r-1$-exposed by $h_i$ in position $l(i)$. We notice the fact that for any $i,j\in[p]$, $|h(i)-h(j)|\geq|i-j|$, which leads to the conclusion that $F$ is $r-1$-exposed by $H_i$ in position $h(l(i))$, since $|F(h(l(i)))-H_i(h(l(i)))|=|h(f(l(i)))-h(h_i(l(i)))|
%\geq |f(l(i))-h_i(l(i))|> r-1$.
%
% We have proved that for any $\pi\in S_{p+q}$ there exists one permutation $F$ such that $F$ is $r-1$-exposed by $C^\pi$. In other words, $L_{min}(G_{p,q})>r-1$.
\end{proof}

%??????????Now things come to $L_{min}(C)$. It's shameful that no special efficient method has been found. How to find a minimal element among the covering radius of the relabeling just means how to find the most sparse distribution of the covering codes after relabeling. We define the minimal distance between the elements in a common covering code $C\in S_n$ as $\bar r$. One possible method is to maximize $\bar r$ using relabeling $\pi$, and the locally optimal solution might be just the optimal solution. However, difficulty of this question has been explained in another article of \cite{tamo2012labeling} that no matter what the set $C$
% is, it always have a ``bad" relabeling with $\bar r\le 2$, while the problem of determining whether a certain code $C$ has a relabeling with $\bar r\ge 2$ is NP-complete. What can do is only to use ball-volume model to give a rough estimate.

 \section{The Covering Radius of $D_n$}\label{s:dn}
 In  \cite{karni2018infinity}, the authors gave an estimate of the covering radius of $D_n$ as follows.
 \begin{equation}\label{old}
n-\left\lfloor\frac{\sqrt{4n+1}+1}{2}\right\rfloor\ge r(D_n)
\ge\begin{dcases}
             n-\left\lceil\frac{\sqrt{288n+297}-3}{16}\right\rceil, & n\in[4,9],\\
             n-\left\lceil\frac{\sqrt{288n+737}-1}{16}\right\rceil, & n\in[10,911],\\
             n-\left\lceil\frac{\sqrt{18n-18}}{4}\right\rceil, &n\ge912.
\end{dcases}
\end{equation}
The gap between the upper and lower bounds in (\ref{old}) goes to infinity as $n$ grows.
 The upper bound of $r(D_n)$, which coincides with  the covering radius of $G_n$, is trivial but seems too hard to be improved.

In this section, we establish a better lower bound of $r(D_n)$, where the new gap is upper bounded by $1$ for all $n\ge 10$. Firstly we give a weaker lower bound, which shows that the gap is no larger than $2$. No matter what $n\in \mathbb N$ is, with high probability $r(D_n)=r(G_n)$ or $r(D_n)=r(G_n)-1$, and for very rare values of $n$, $r(D_n)$ may be $r(G_n)-2$. We state this result as follows.

\begin{theorem}\label{rdnlb}
For all $n\ge 10$,
 \begin{equation*}
n-\left\lfloor\frac{\sqrt{4n+1}+1}{2}\right\rfloor\ge r(D_n)\ge
n-\left\lceil\frac{\sqrt{4n+13}+1}{2}\right\rceil.
\end{equation*}
\end{theorem}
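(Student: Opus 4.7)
The plan is to prove the lower bound $r(D_n) \geq n-k$ with $k = \lceil(\sqrt{4n+13}+1)/2\rceil$ by constructing an explicit permutation $f_0 \in S_n$ that is $r_0$-exposed by every element of $D_n$, where $r_0 = n-k-1$; the matching upper bound is inherited from $r(G_n)$ since $G_n\subseteq D_n$.

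First, I would adapt the $A$-set machinery of Section~\ref{s:gpq} to the dihedral setting. Writing $D_n = G_n \cup G_n\tau$ with $\tau = \prod_{i=1}^{\lfloor n/2\rfloor}(i,n-i)$, each coset acts sharply transitively on $[n]$, so its elements can be indexed by $g^{-1}(1)$. For $f\in S_n$ and $i\in[n]$, define
\[
A^{G_n}_{i\to f(i)}=\{g^{-1}(1):g\in G_n,\ |f(i)-g(i)|>r_0\},\quad A^{G_n\tau}_{i\to f(i)}=\{g^{-1}(1):g\in G_n\tau,\ |f(i)-g(i)|>r_0\}.
\]
A direct count analogous to Lemma~\ref{counting} shows $|A^{G_n}_{i\to j}|=|A^{G_n\tau}_{i\to j}|$ equals $j-r_0-1$ when $j\in T=[r_0+2,n]$, equals $n-r_0-j$ when $j\in B=[n-r_0-1]$, and vanishes otherwise. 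A short calculation further yields the explicit formulas $g^{-1}(1)=(1+i-g(i))\mm n$ for $g\in G_n$ and $g^{-1}(1)=(g(i)+i-1)\mm n$ for $g\in G_n\tau$, so each $A$-set is a cyclic interval modulo $n$. An analogue of Lemma~\ref{iff} then reads: $f$ is $(r_0,D_n)$-exposed if and only if $\bigcup_{i\in[n]}A^{G_n}_{i\to f(i)}=[n]$ and $\bigcup_{i\in[n]}A^{G_n\tau}_{i\to f(i)}=[n]$.

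Second, I would construct $f_0$ by a $\lambda$-sequence placement as in Theorem~\ref{rGpq}: place the $k-1$ largest values $n,n-1,\ldots,n-k+2$ at an increasing sequence of positions, and the $k-1$ smallest values $1,2,\ldots,k-1$ at the $\tau$-paired positions. For $G_n$, the corresponding $A$-intervals chain together exactly as in the construction for $G_n$ in \cite{karni2018infinity}, tiling $[n]$ once $k(k-1)\geq n$. For $G_n\tau$, the intervals $A^{G_n\tau}_{i\to j}$ are oriented opposite to $A^{G_n}_{i\to j}$ (reversed sign in the $g(i)$-dependence), but the $\tau$-mirrored placement in $f_0$ causes them to stitch together into a second tiling of $[n]$, provided $k(k-1)$ exceeds $n$ by a small slack absorbing the shift by $i$ and the wrap-around at the fixed point $\tau(n)=n$. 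Carrying out the arithmetic strengthens the requirement to $k(k-1)\geq n+3$, whose smallest integer solution is exactly $k=\lceil(\sqrt{4n+13}+1)/2\rceil$.

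The main obstacle is verifying both tilings for a single $f_0$. The orientation-reversing nature of $\tau$ couples the two covering requirements nontrivially, especially near $\tau$'s fixed point $n$ and at the wrap-around of the cyclic arithmetic: an $A^{G_n}$-interval centered at position $i$ is a translate of $[-j+r_0+2,0]$ while the corresponding $A^{G_n\tau}$-interval is a translate of $[0,j-r_0-2]$, so without the mirrored placement one of the two unions inevitably leaves a gap. I expect the proof to finish by an interval-counting argument that enumerates the positions and values of $f_0$, lists both families of cyclic intervals, and shows the two unions each equal $[n]$. The hypothesis $n\geq 10$ rules out small-$n$ degeneracies where $k$ is too small for the tiling to function as intended; any residual small cases can be checked by inspection.
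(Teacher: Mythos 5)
Your high-level strategy coincides with the paper's: the upper bound is inherited from $r(G_n)$, and the lower bound comes from constructing an explicit $f_0$ that is $r_0$-exposed, with $r_0=n-k-1$ and $k=\lceil(\sqrt{4n+13}+1)/2\rceil$ characterized as the least integer with $k(k-1)\ge n+3$. Your reduction is also sound: splitting $D_n$ into two sharply transitive cosets, indexing each by $g^{-1}(1)$, computing $|A_{i\to j}|=j-r_0-1$ for $j\in T$ and $n-r_0-j$ for $j\in B$, and observing that $f_0$ is exposed iff \emph{both} coset-unions equal $[n]$ (correctly an ``and'' here, versus the ``or'' of Lemma~\ref{iff}) is an accurate reformulation of what the paper verifies directly in one-line notation.

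The gap is that the one step carrying all the content --- writing down a concrete placement and checking that the two oppositely oriented interval families tile $[n]$ \emph{simultaneously} --- is exactly the step you defer, and the placement you propose does not survive the check as stated. Concretely: if the large values $n,\dots,n-k+2$ sit at positions $p_1<\dots<p_{k-1}$ with $p_1=1$ and $p_{i+1}=p_i+k-i$ (so that their intervals for the first coset chain exactly) and the small values sit at the $\tau$-paired positions $n-p_i$, then the first coset is indeed covered, but for the reflected coset the two resulting blocks are $\left[1,{k\choose 2}+1\right]$ and $\left[n-{k\choose 2}-2,\,n-2\right]$: they overlap redundantly in the middle and leave $\{n-1,n\}$ uncovered, so two group elements remain unexposed. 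The mirrored symmetry does not make the second tiling close up at the wrap-around; an asymmetric, interleaved placement is required. That is what the paper builds: values $i\in[k-1]$ go at positions ${k\choose 2}-{k-i+1\choose 2}+1\in\left[1,{k\choose 2}\right]$ and values $i\in[n-k+2,n]$ at positions ${k\choose 2}+{i-n+k\choose 2}-2\in\left[{k\choose 2}-1,\,k(k-1)-2\right]$ reduced $\bmod\ n$, together with a perturbation $\lambda'$ of the last position to avoid a collision after reduction --- a well-definedness issue (the chosen positions must be pairwise distinct) that your sketch never addresses. Until explicit positions are fixed and both unions are verified, including the wrap-around and the collision case, the lower bound is asserted rather than proved.
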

\begin{proof}
The strategy of our proof is similar to that of Theorem~\ref{rGpq}, but more complicated. First let $r_0=n-\left\lceil\frac{\sqrt{4n+13}+1}{2}\right\rceil-1$. It suffices to show that there exists a permutation $f_0\in D_n$ which is $r_0$-exposed by $D_{n}$.

Let $k=n-r_0-1=\left\lceil\frac{\sqrt{4n+13}+1}{2}\right\rceil\geq 5$, and denote $d_{t}\triangleq{t\choose 2}$ for $t\in[n]$.  Before  constructing the permutation $f_0\in D_n$, we define a sequence of numbers $\lambda(i)$, for some integers $i\in [n]$, as follows:

\begin{equation*}
\lambda(i)=
\left\{
             \begin{array}{ll}
             d_{k}-d_{k-i+1}+1, & i\in[k-1] ,\\
              d_{k}+d_{i-n+k}-2, & i\in[n-k+2,n].\\

             \end{array}
\right.
\end{equation*}

Note that the sequence $\lambda(i)$ will be served as locations of some elements in $f_0$. So we need to check that whether they have repeated or invalid values.
In the range $[k-1]$, we know that $\lambda(1)=1$ and  $\lambda(i)-\lambda(i-1)=k-i+1\geq 2$, so the sequence $\lambda(i)$ is going up to $\lambda(k-2)=d_k-2$ and  $\lambda(k-1)=d_k\leq n$. In the range $[n-k+2,n]$, we have $\lambda(n-k+2)=d_k-1$, $\lambda(n-k+3)=d_k+1$, and $\lambda(i)-\lambda(i-1)=k+i-n-1\geq 2$ for $i\geq n-k+3$. So the sequence $\lambda(i)$ also increases to $\lambda(n)=2d_k-2\geq n+1$, since $k$ is the least positive integer that satisfies $k^{2}-k-2\geq n+1$.

Since  $\lambda(n-k+2)=d_k-1<n$,  we  denote $I$  the largest number $i\in[n-k+2,n]$ such that $\lambda(i)\leq n$. Note that $n-k+2\leq I<n$. Then we define a new value $\lambda'$ as follows, which will be used to replace the value $\lambda(I+1)$ if there is a confliction.
\begin{itemize}
\item[(1)] If $ \lambda(I+1) \mm n $ is different from values $\lambda(i)$ for all $i\in [k-1]\cup[n-k+2,I]$, then let  $\lambda'=(d_{k}+d_{I+1-n+k}-2) \mm n $.
    \item[(2)]If $ \lambda(I+1) \mm n =\lambda(j)$ for some $j\in [k-1]\cup[n-k+2,I]$, then $j$ must belong to $[k-1]$. This follows from the fact that $\lambda(I)\leq n$, and then $\lambda(I+1)=\lambda(I)+k+I-n\leq k+I$, which is less than or equal to $k\leq d_k-4$ (since $k\geq 5$) after taking $\mm n$ operation. Since the only consecutive values in the sequence $\lambda(i)$ for $i\in [k-1]\cup[n-k+2,I]$ are $\lambda(k-2)$, $\lambda(n-k+2)$, $\lambda(k-1)$, $\lambda(n-k+3)$, whose values are $d_k-2$, $d_k-1$, $d_k$, $d_k+1$, respectively, we can increase the value $\lambda(I+1)$ by one, i.e., let $\lambda'=(d_{k}+d_{I+1-n+k}-1) \mm n $.
\end{itemize}

From the definition of $\lambda'$, we can see that the values $\lambda(i)$ for $ i\in [k-1]\cup[n-k+2,I]$, and $\lambda'$ are pairwise distinct values in $[n]$, thus they form a set of well defined locations for $f_0$.
 Now we give the permutation $f_0\in D_n$ by defining values on these selected positions.
 \begin{equation*}\label{pqf}
f_0(j)= \begin{dcases}
            i, &  \textrm{if } j=\lambda(i)  \textrm{ for } i\in [k-1]\cup[n-k+2,I],  \\
             I+1, &  \textrm{if } j=\lambda',  \\
             \textrm{arbitrary}, &\textrm{otherwise}.
             \end{dcases}
\end{equation*}
It is easy to check that the permutation $f_0$ is well defined.
To check that $f_0$ is $r_0$-exposed by $D_n$, we use the one-line notation of permutations in $D_n$. We write \[D_n=\{A_i: i\in [n]\}\cup \{B_i: i\in [n]\},\] where \[A_i=[(i-1) \mm n,(i-2)\mm n ,\ldots,i \mm n]\] and \[B_i=[(n-i+2)\mm n,(n-i+3)\mm n,\ldots,(n-i+1)\mm n].\]

First, we check $f_0$ is $r_0$-exposed by each permutation $A_i$, $i\in [n]$. We focus on the defined positions $\lambda(i)$ of $f_0$. At position $\lambda(1)=1$, we have $f_0(1)=1$, so $f_0$ is $r_0$-exposed by permutations whose value on position $1$ is at least $ r_0+2=n-k+1$, i.e., $(i-1)\mm n\geq n-k+1$. So we get $A_{1},A_{n-k+2},\ldots, A_{n}$ are $r_0$-exposed by $f_0$ at position $\lambda(1)$. For a fixed position $\lambda(j)$, $2\leq j\leq k-1$, we have $f_0(\lambda(j))=j$ and $A_i(\lambda(j))=(i-\lambda(j)) \mm n$. So we need $(i-\lambda(j)) \mm n\geq r_0+1+j=n-k+j$, that is $i\in [\lambda(j-1)+1,\lambda(j)]$. Here the right margin $\lambda(j)$ comes from the fact that $0\mm n=n$.  Similarly, when $j\in [n-k+2,I-1]$, at position $\lambda(j)$, we get that $A_i$ is $r_0$-exposed by $f_0$ for all $i \in [\lambda(j)+1, \lambda(j+1)]$ by solving the inequality $(i-\lambda(j)) \mm n\leq j-(r_0+1)$. If $\lambda(I)=n$, then we have proved that each $A_i$ is $r_0$-exposed by $f_0$. If $\lambda(I)<n$, solving the same inequality for $j=I$, we obtain that $A_i$ is $r_0$-exposed by $f_0$ at position $\lambda(I)$ for all $i \in [\lambda(I)+1, n]$, hence we get the same conclusion.

Next, we check $f_0$ is $r_0$-exposed by each permutation $B_i$, $i\in [n]$.  We prove it by the same strategy. For  a position $\lambda(j)$ with $j\in[k-2]$, we have $B_i(\lambda(j))=(n-i+1+\lambda(j))\mm n$. Solve the inequality  $(n-i+1+\lambda(j))\mm n\geq j+r_0+1$, we get $i\in[\lambda(j)+1, \lambda(j+1)+1]$. For $j\in [n-k+3,I]$, we solve the inequality $j-(r_0+1)\geq(n-i+1+\lambda(j))\mm n$, then we get $i\in [\lambda(j-1),\lambda(j)]$. For $j=I+1$, at the position $\lambda'$, solving the  inequality $I+1-(r_0+1)\geq(n-i+1+\lambda')\mm n$, we find that $i\in [1,\lambda']\cup[\lambda(I)+1,n]$ satisfies the inequality.  Combining the fact that $\lambda(n-k+2)=d_k-1<\lambda(k-1)$, we have proved that for all $i\in [n]$, $B_i$ is $r_0$-exposed by $f_0$.

\end{proof}

Remark: The gap between the upper and lower bound in Eq.(\ref{old}) could be arbitrarily large as $n$ goes to infinity. The lower bound in Theorem~\ref{rdnlb} significantly reduces this gap to $1$ or $2$ for all $n\geq 10$. In fact, only when $n=m(m-1)-1$ or $n=m(m-1)-2$, for some $m\in\mathbb N$, the gap $\left\lceil\frac{\sqrt{4n+13}+1}{2}\right\rceil-\left\lfloor\frac{\sqrt{4n+1}+1}{2}\right\rfloor =2$. For all other values $n$, the gap is just one. The next lemma further reduces the gap to one for all $n$.
%The probability that the gap equals $2$ is $O(\frac{1}{n})\to 0$ when $n\to \infty$.
\begin{lemma}\label{refinement}
When $n=m(m-1)-2$, $m(m-1)-1$ or $m(m-1)$ for any integer $m\ge 3$, then  $r(D_n)\geq n-m$.
\end{lemma}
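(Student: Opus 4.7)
The plan is to show $r(D_n)\ge n-m$ by constructing, in each of the three cases, a permutation $f_0\in S_n$ satisfying $d(f_0,g)>r_0$ for every $g\in D_n$, where $r_0=n-m-1$. I will adapt the construction from Theorem~\ref{rdnlb}, but with the more aggressive choice $k=m$; the sharper bound should be achievable precisely because the boundary inequality $\lambda(n)\ge n+1$ used there fails by only $1$, $2$, or $3$ for these three values of $n$, which turns out to be manageable with a small amount of extra work.

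Concretely, I will reuse the recipe of Theorem~\ref{rdnlb}. With $d_t=\binom{t}{2}$ and $k=m$, define $\lambda(i)$ for $i\in[m-1]\cup[n-m+2,n]$ by the same formulas. Since $n\le m(m-1)$, one checks that $\lambda(n)=2d_m-2\le n$, so every $\lambda(i)$ lies in $[n]$ and the $2(m-1)$ prescribed positions are pairwise distinct. Set $f_0(\lambda(i))=i$ on these positions, leaving $n-2(m-1)$ positions and values free. Then mimic the case analysis from Theorem~\ref{rdnlb}, splitting $D_n$ into the families $\{A_i\}$ and $\{B_i\}$ and identifying position-by-position which of them are $r_0$-exposed. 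A direct calculation should show that every $A_i$ is exposed --- the contribution from $j=n$ wraps around modulo $n$ and covers the $A_i$'s with $i$ close to $1$ --- and that every $B_i$ is exposed except for a short list $S\subseteq[n]$ of ``tail'' indices concentrated near $1$ and $n$. Depending on whether $n$ is $m(m-1)-2$, $m(m-1)-1$, or $m(m-1)$, the set $S$ should have cardinality $1$, $2$, or $3$.

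The final step is to patch up $f_0$ at a small number of still-free positions so that each $B_i$ with $i\in S$ becomes $r_0$-exposed, and then fill in the remaining entries arbitrarily. Since the already-assigned values form the extremes $[m-1]\cup[n-m+2,n]$ and $2(m-1)\le n-2$ for $m\ge 3$, enough intermediate values and free positions remain to make this feasible by explicit choice. The hard part will be this last step: the auxiliary slot $\lambda'$ introduced in the proof of Theorem~\ref{rdnlb} to handle the tail of $B_i$'s relied on $\lambda(I+1)>n$, which fails here, so I must work with the unused positions instead and check, for each of the three subcases, both that the chosen assignment exposes the missing $B_i$'s and that the resulting $f_0$ is a genuine permutation in $S_n$.
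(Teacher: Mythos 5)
There is a genuine gap, and it sits exactly where you flagged the ``hard part.'' Your identification of the exceptional set is accurate: with the unmodified $\lambda$ of Theorem~\ref{rdnlb} and $k=m$, all the $A_i$ are exposed and the unexposed $B_i$ form $S=\{1\}$, $\{1,n\}$, $\{1,n-1,n\}$ in the three cases. But the final patching step cannot be carried out --- in any of the three cases, not only when $|S|=3$. After you place the values $[m-1]\cup[n-m+2,n]$ at the positions $\lambda(i)$, the values still free are exactly those in $[m,n-m+1]$. A free value $v$ at a free position $j$ can $r_0$-expose a permutation $g$ only if $|g(j)-v|\geq n-m$; for $v\in[m+1,n-m]$ this is impossible, and for $v=m$ (resp.\ $v=n-m+1$) it forces $g(j)=n$ (resp.\ $g(j)=1$). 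Hence only two of the free values are usable at all, and each exposes exactly one $B_i$, determined by where it is placed: at most two patches against up to three missing permutations. Worse, $B_1=\mathrm{id}$ can be exposed only at position $1$ (occupied, since $\lambda(1)=1$) or at position $n$ with the value $m$; when $n=m(m-1)-2$ one has $\lambda(n)=n$, so position $n$ is also occupied and $B_1$ cannot be patched at all, and when $n=m(m-1)-1$ both $B_1$ and $B_n$ require position $n$ (with values $m$ and $n-m+1$ respectively), so they collide. So the claim that ``enough intermediate values and free positions remain'' is false: the intermediate values are useless for creating exposures and the two boundary values are insufficient or blocked.

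The paper escapes this by redesigning the location sequence rather than appending to it: it sets $\lambda(1)=n-1$, so that $f_0(n-1)=1$ exposes $B_i$ for all $i\in[m-1]\cup\{n\}$ (in particular the identity and $B_n$, the very permutations your patch cannot reach), and it compresses several gaps (using $d_m-d_{m-i+1}$ in place of $d_m-d_{m-i+1}+1$, with ad hoc positions at $i=m-1$, $m$, $n-m+1$, $n-m+2$) to recover the coverage lost by that move; the range $3\le m\le 5$, which your argument does not address, is settled by the computed values in Table~\ref{t1}. To repair your proof you would have to relocate at least the values $1$ and $n$ away from positions $1$ and $n$, at which point you are redesigning $\lambda$ essentially as the paper does.
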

\begin{proof}
When $3\leq m\leq 5$, the exact values of $r(D_n)$ are listed in Table \ref{t1}. When $m>5$, we prove it by contradiction. We want to find a permutation $f_0$ far away from every element in $D_n$, i.e., for any given $\pi\in D_n$, $d(f_0,\pi)>n-m-1$. We prove those three cases separately.

When $n=m(m-1)-2$, we define $f_0$ using a location sequence $\lambda$ as follows:
\begin{displaymath}
\lambda(i)=
\left\{
             \begin{array}{ll}
             n-1, & i=1,\\
             d_{m}-d_{m-i+1}+1, & i\in[2,m-1] ,\\
             d_m-d_{m-1}, & i=m,\\
              d_{m}+d_{i-n+m}-2, & i\in[n-m+2,n],\\

             \end{array}
\right.
\end{displaymath}
where $d_{t}\triangleq{t\choose 2}$ for $t\in[n]$. The number $\lambda(i)$ is served as the location of $i$ and we define $f_0$ as follows:
\begin{equation}\label{f0}
f_0(j)=
\left\{
             \begin{array}{ll}
             i, & \textrm{if $j=\lambda(i)$ for some $i\in[m]\cup[n-m+2,n]$,}\\
             \textrm{arbitrary}, & \textrm{otherwise}.\\

             \end{array}
\right.
\end{equation}

For other cases, we define $f_0$ using the similar way as in Eq~(\ref{f0}), but with different location sequences defined on $[m]\cup[n-m+1,n]$. When $n=m(m-1)-1$, we define $\lambda$ as follows:
\begin{displaymath}
\lambda(i)=
\left\{
             \begin{array}{ll}
             n-1, & i=1,\\
             d_{m}-d_{m-i+1}, & i\in[2,m-2] ,\\
             d_m-d_2+1, & i=m-1,\\
             d_m-d_3+1, & i=m,\\
             d_m+d_3-2, & i=n-m+1,\\
             d_m+d_2-2, & i=n-m+2,\\
             d_{m}+d_{i-n+m}-1, & i\in[n-m+3,n].\\

             \end{array}
\right.
\end{displaymath}

When $n=m(m-1)$, we define $\lambda$  as follows:
\begin{displaymath}
\lambda(i)=
\left\{
             \begin{array}{ll}
             n-1, & i=1,\\
             d_{m}-d_{m-i+1}, & i\in[2,m-2] ,\\
             d_m-d_2+1, & i=m-1,\\
             d_m-d_3+1, & i=m,\\
             d_m+d_3-2, & i=n-m+1,\\
             d_m+d_2-2, & i=n-m+2,\\
             d_{m}+d_{i-n+m}-1, & i\in[n-m+3,n-2],\\
               d_{m}+d_{i-n+m}, & i=n-1,\\
             n, & i=n.\\

             \end{array}
\right.
\end{displaymath}

The method to check that $f_0$ is $(n-m-1)$-exposed by every permutation in $D_n$ is much the same as the method we use in Theorem~\ref{rdnlb}. We leave it to the readers.
\end{proof}

Lemma~\ref{refinement} improves the lower bound of the covering radius $r(D_n)$ by one for all $n$ of the special forms. Combining Lemma~\ref{refinement} and Theorem~\ref{rdnlb},  we get the following result.

\begin{theorem}\label{fin}
For all integer $n\ge 3$, we have
\begin{displaymath}
n-\left\lfloor\frac{\sqrt{4n+1}+1}{2}\right\rfloor\ge r(D_n)\ge n-\left\lfloor\frac{\sqrt{4n+1}+1}{2}\right\rfloor-1.
\end{displaymath}
Specially, if there exists some integer $m>0$ such that $n=m(m-1)$, we know the exact value of $r(D_n)=n-m$.
\end{theorem}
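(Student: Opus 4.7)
The plan is to deduce the two-sided bound by combining the generic lower bound from Theorem~\ref{rdnlb} with the sharpening Lemma~\ref{refinement} on the exceptional quadratic values, and then to read off the exact value when $n=m(m-1)$.

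I would begin by recording the intervals on which the two step functions $a(n)\triangleq\lfloor(\sqrt{4n+1}+1)/2\rfloor$ and $b(n)\triangleq\lceil(\sqrt{4n+13}+1)/2\rceil$ are constant. Squaring the defining inequalities gives $a(n)=k\iff n\in[k(k-1),\,k(k+1)-1]$ and $b(n)=k\iff n\in[k^2-3k,\,k^2-k-3]$ for $k\ge 2$. Intersecting these ranges yields the arithmetic classification $b(n)-a(n)\in\{1,2\}$ for every $n\ge 10$, with $b(n)-a(n)=2$ occurring precisely on the two-element fibres $n\in\{m(m-1)-2,\,m(m-1)-1\}$ for integers $m\ge 4$.

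For every $n\ge 10$ outside these fibres, Theorem~\ref{rdnlb} already furnishes $r(D_n)\ge n-b(n)=n-a(n)-1$, which is precisely the claimed lower bound. For the fibre points, the bracketing $(2m-2)^2<4n+1<(2m-1)^2$ gives $a(n)=m-1$, and Lemma~\ref{refinement}'s bound $r(D_n)\ge n-m$ therefore equals $n-a(n)-1$; the upper half of the inequality is the trivial bound already in Theorem~\ref{rdnlb}. For the distinguished value $n=m(m-1)$ the identity $4n+1=(2m-1)^2$ gives $a(n)=m$ exactly, so the trivial upper bound reads $n-m$, while Lemma~\ref{refinement} supplies the matching lower bound $r(D_n)\ge n-m$; this pins down $r(D_n)=n-m$, proving the second assertion. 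The residual small values $3\le n\le 9$ sit outside the hypothesis of Theorem~\ref{rdnlb} and would be disposed of by direct inspection, using for instance $D_3=S_3$ (so $r(D_3)=0$) together with the explicit values recorded in the table cited in Lemma~\ref{refinement}.

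I expect the only real obstacle to be the bookkeeping in the interval-intersection step: one must confirm that the two monotone functions $a$ and $b$ differ by more than one at exactly the two quadratic shifts $m(m-1)-2$ and $m(m-1)-1$ and nowhere else, which amounts to a tight but routine comparison of consecutive integer squares. Once that is in hand, the proof is a direct assembly of Theorem~\ref{rdnlb} and Lemma~\ref{refinement}.
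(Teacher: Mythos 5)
Your proposal is correct and is essentially identical to the paper's own argument: the paper obtains Theorem~\ref{fin} exactly by combining the generic lower bound of Theorem~\ref{rdnlb} with Lemma~\ref{refinement}, using the same observation (stated in the remark before Lemma~\ref{refinement}) that $\left\lceil\frac{\sqrt{4n+13}+1}{2}\right\rceil-\left\lfloor\frac{\sqrt{4n+1}+1}{2}\right\rfloor=2$ precisely when $n=m(m-1)-1$ or $n=m(m-1)-2$, and handling $n=m(m-1)$ and the small cases $3\le n\le 9$ via Lemma~\ref{refinement} and the table of computed values. Your interval computations for $a(n)$ and $b(n)$ are accurate, so nothing further is needed.
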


 \subsection{Efficient algorithms of $r(D_n)$}

In Table \ref{t1}, we list the exact values of $r(D_n)$ for $n\le 20$ which are determined by computer search. Here, the subscript $u$ means the exact value achieves the upper bound of Theorem~\ref{fin}, the subscript $l$ means achieving the lower bound, and $e$ means the exact value achieves both the upper and the lower bounds.

\begin{table}[h]
\centering
\begin{tabular}{cccccccccc}
\hline
$n$&3&4&5&6&7&8&9&10&11\\
\hline
$r(D_n)$&$0_l$&$1_l$&$2_l$&$3_e$&$4_u$&$5_u$&$5_l$&$6_l$&$7_l$\\
\hline
$n$&12&13&14&15&16&17&18&19&20\\
\hline
$r(D_n)$&$8_e$&$9_u$&$10_u$&$11_u$&$12_u$&$12_l$&$13_l$&$14_l$&$15_e$\\
\hline
\end{tabular}
\caption{Exact values of $r(D_n)$ for small $n$}
\label{t1}
\end{table}

We now describe our algorithm on determining $r(D_n)$. If we use exhaustive search, we need to compute $O(n!)$ values of $d(f, D_n)$ for each $f\in S_n$, and then output the largest one among them as $r(D_n)$. When $n$ becomes bigger, it takes a very long time that we can not afford to finish the program. In our algorithm, we make use of the two subsets $B=[n-r-1]$ and $T=[r+2,n]$, where $r$ is the lower bound given in Theorem~\ref{rdnlb}. As mentioned in Section~\ref{s:gpq}, only the numbers in $B$ or $T$ can create a difference bigger than $r$ from other numbers in $[n]$. Our algorithm depends on the following observation:

\vspace{0.3cm}
\emph{for any two  permutations $f$ and $f'$ in $S_n$, if $f^{-1}(i)=f'^{-1}(i)$ for all $i\in B\cup T$, then either $d(f, D_n)=d(f', D_n)>r$, or $d(f, D_n)\leq r$ and $d(f', D_n)\leq r$.}
\vspace{0.3cm}

\noindent From the above observation, we only need to take care of the permutations with distinct locations for members in $B\cup T$. Hence, we only need to compute $d(f, D_n)$ for $n(n-1)(n-2)\cdots(n-(2n-2r-2)+1)$ permutations $f$. This greatly reduces the computation time since the lower bound $r$ is very close to $n$.

% And the calculation times grows fast when $n$ increase. We have mentioned that only the numbers in $B$ and $T$ can make exposition, so only the locations of numbers in $B$ and $T$ is needed to be considered. We get $r$ as the lower bound of covering radius, so the numbers of $B$ and $T$ are indeed no less than the case when we use true covering radius. A sequence $b$ is created to save those $2n-2r-2$ numbers in order. Then one sequence $a$ with length $2n-2r-2$ whose numbers in each $a[i]$ is in $[n]$ for $i\in[2n-2r-2]-1$ and distinct for distinct $i$, means a unique length $n$ permutation with $b_i$ in the $(a_i)_{th}$ location. We write a self-adding function of the kind of sequence, with each item in $[n]$ and no repetition. We use the self-adding function to only check those permutations whose locations of members in $B\cup T$ are different. In this way, we only need to check $n(n-1)(n-2)\cdots(n-(2n-2r-2)+1)$ permutations and saves lots of time.

Note that the above algorithm works for any lower bound $r\leq r(D_n)$. When the lower bound $r$ is not good, we would like to use a bigger number $\tilde{r}>r$ to replace $r$ in our algorithm to reduce the computation time. However,  we don't know this $\tilde{r}$ is a lower bound or not at this time. If it is not, then our algorithm fails to give us the correct answer. We claim that:

\vspace{0.3cm}
\emph{if our algorithm returns a value $\tilde{r}(D_n)$ which is no less than $\tilde{r}$, then $\tilde{r}$ is indeed a lower bound, and hence $\tilde{r}(D_n)$ is the correct covering radius $r(D_n)$.}
\vspace{0.3cm}

\noindent In fact, when we input $\tilde{r}>r$, the subsets $B$ and $T$ become smaller. By our algorithm, this means we compute a smaller set of values $d(f, D_n)$, and among which the maximum value $\tilde{r}(D_n)$ can not exceed the real covering radius $r(D_n)$. So if $\tilde{r}(D_n)\geq \tilde{r}$, which means $\tilde{r}$ is indeed a lower bound of $r(D_n)$, and our algorithm gives us the correct answer.

% We have missed lots of information and we can calculate faster to get a result. Actually, because the distance of two permutations $f$ and $g$ means to get the largest one for gaps in all location, lesser locations to calculate means smaller result. As the distance $d(f, C)$ use the smallest one, the output of $d(f,C)$ shrinks. The covering code means to get the largest distance among $f\in S_n$. In this sense, the error output $r_0(D_n)$ is no bigger than $r(D_n)$. If even in this case, $r_0(D_n)\ge r_0$, this means $r(D_n)\ge r_0$. We don't lose any information required to get $r(D_n)$. So the result is right.

\section{Conclusion}\label{s:con}
%As an extension of the transitive cyclic group $G_n$, we studied a new covering code, the direct product of two cyclic groups, denoted by $G_{p,q}$. According to  Construction A in \cite{karni2018infinity}, $G_{p,q}$ can also be regarded as a building-block code. This building-block code has length $p+q$, size $p\times q$, and covering radius $r(G_{p,q})$ which was determined in Theorem 3.1. Recall the method in \cite{karni2018infinity} of combing short building-block codes into a single longer covering code, $C_n$. The construction uses a parameter $1\le m\le n$ and results in a code of length $n$. When replacing the building-block code with $G_{p,q}, p+q=m$, Construction A provides a code of size
%\begin{displaymath}
%|C_n|=
%\left\{
%             \begin{array}{lr}
%             \frac{n!{(pq)}^{\frac{n}{m}}}{(m!)^{\frac{n}{m}}} & n\equiv 0\mod m;\\
%             \frac{n!\lfloor\frac{n\mod m}{2}\rfloor\lceil\frac{n\mod m}{2}\rceil{(pq)}^{\lfloor\frac{n}{m}\rfloor}}{(m!)^{\lfloor\frac{n}{m}\rfloor}(n\mod m)!} & otherwise.
%
%             \end{array}
%\right.
%\end{displaymath}
%During such a process, to attain smaller covering code of $C_n$, $p$ and $q$ should be as close as possible, which means $p=q$ or $q+1$. Then the covering code $r(C_n)$ shares the same value with $r(G_{\lfloor\frac{m}{2}\rfloor\times\lceil\frac{m}{2}\rceil})$, which is also the smallest covering radius among all $r(G_{p,q})$ with $p+q=m$. Detailed properties of this code still remains to be discovered.

In this paper we studied the covering radius of  permutation groups  with $l_\infty$-metric. We determine the covering radius of a $(p,q)$-type group, $G_{p,q}\triangleq \left<(1,2,\cdots,p),(p+1,p+2,\cdots,p+q)\right>$, and the maximum value among the covering radii of all its relabelings. The method we described extends the one used in \cite{karni2018infinity}, and can be used for large groups.

 Given a finite integer $k\ge 1$, let ${p_i}, i\in[k]$ be positive integers with non-increasing order. The  {\em natural $(p_1,p_2,\ldots ,p_k)$-type} group  is defined by $G_{p_1, p_2,\cdots, p_k}\triangleq G_{p_1}\otimes G_{p_2}\otimes\cdots G_{p_k}$,
where $G_{p_i}=\langle(\sum_{s=1}^{i-1}p_s+1, \sum_{s=1}^{i-1}p_s+2,\ldots,\sum_{s=1}^{i}p_s)\rangle$, $i\in[k]$. By the same technique, we obtain

\begin{equation}
r\left(G_{p_1, p_2,\cdots, p_k}\right)= n-p_k+\left\lfloor \left(\sqrt{p_k+\frac{1}{8}}-\frac{\sqrt{2}}{2}\right)^2-\frac{1}{8}\right \rfloor
\end{equation}
and \begin{equation}
L_{max}\left(G_{p_1, p_2,\cdots, p_k}\right)= \begin{dcases} n-\left\lceil \frac{\sqrt{4p_k+1}-1}{2}\right\rceil,& p_k\ge 3,\\
n-p_k, &p_k<3.
\end{dcases}
\end{equation}Details about the proofs of the above results can be provided upon requests.

Another main contribution of this article is that we gave a better lower bound of the covering radius of dihedral group $D_n$, which differs from the upper bound by at most one. This improves the result in \cite{karni2018infinity}, where the gap grows with $n$. Our new result depends on the construction of a permutation that is far from all elements of $D_n$. The algorithm we used to determine $r(D_n)$ for small values of $n$ is very efficient, and works for any permutation group. The experimental results show that both the upper bound and the lower bound maybe tight for $r(D_n)$. We leave this  problem for future study.

\section*{Acknowledgments}
This research is supported by NSFC  under grant 11771419  and by ``the  Fundamental
Research Funds for the Central Universities''.

%
%\vskip 10pt
%\bibliographystyle{IEEEtran}
%\bibliography{D:/work/1/JabRefdata/mine}

% Generated by IEEEtran.bst, version: 1.13 (2008/09/30)

\end{document}